\tikzstyle{vertex}=[circle,draw=black,fill=black,inner sep=0,minimum size=3pt,text=white,font=\footnotesize]
\newtheorem{thm}{Theorem}[section]
\newtheorem{prop}[thm]{Proposition}
\newtheorem{defin}[thm]{Definition}
\newtheorem{corollary}[thm]{Corollary}
\newtheorem{clm}[thm]{Claim}
\newtheorem{observation}[thm]{Observation}
\newtheorem*{lemma*}{Lemma}
\newtheorem*{proposition*}{Proposition}
\newtheorem*{theorem*}{Theorem}
\newcommand\ex{\ensuremath{\mathrm{ex}}}
\newcommand\re{\ensuremath{\mathrm{na}}}
\newcommand\cC{{\mathcal C}}
\newcommand\cF{{\mathcal F}}
\newcommand\cN{{\mathcal N}}
\newcommand{\ignore}[1]{}
\title{A non-aligning variant of generalized Tur\'an problems}
\author{D\'aniel Gerbner\footnote{Alfr\'ed R\'enyi Institute of Mathematics, E-mail: \texttt{gerbner.daniel@renyi.hu.} Research supported by the
    National Research, Development and Innovation Office -- NKFIH under the
    grants FK 132060, KKP-133819, KH130371 and SNN 129364.}}
    \date{}
\begin{document}

\maketitle

\begin{abstract}
In the so-called generalized Tur\'an problems we study 
the largest number of copies of $H$ in an $n$-vertex $F$-free graph $G$.
   Here we introduce a variant, where $F$ is not forbidden, but we restrict how copies of $H$ and $F$ can be placed in $G$. More precisely, given an integer $n$ and graphs $H$ and $F$, what is the largest number of copies of $H$ in an $n$-vertex graph such that the vertex set of that copy does not contain and is not contained in the vertex set of a copy of $F$?

    We solve this problem for some instances, give bounds in other instances, and we use our results to 
    determine the generalized Tur\'an number for some pairs of graphs.
\end{abstract}

\section{Introduction}

One of the most basic question of extremal graph theory is due to Tur\'an \cite{T}: given a graph $F$ and an integer $n$, what is the largest number of edges an $n$-vertex graph can have if it does not contain a subgraph isomorphic to $F$? This quantity is denoted by $\ex(n,F)$ and is a subject of extensive research. In particular, Tur\'an determined $\ex(n,K_{k+1})$ for every $n$ and $k$. It is the number of edges in the graph $T_k(n)$, which is the $n$-vertex complete $k$-partite graph with each part having order $\lfloor n/k\rfloor$ or $\lceil n/k\rceil$. $T_k(n)$ is now called the \emph{Tur\'an graph}.

A natural generalization is the following. Instead of the edges, we count the copies of another subgraph $H$. We denote by $\ex(n,H,F)$ the largest number of copies of $H$ in $n$-vertex $F$-free graphs. Problems concerning this quantity are often simply called \emph{generalized Tur\'an problems}. For graphs $H$ and $G$, let $\cN(H,G)$ denote the number of copies of $H$ in $G$. Then $\ex(n,H,F)=\max\{\cN(H,G): G \text{ has $n$ vertices and is $F$-free}\}$. If for an $n$-vertex graph $G$ we have $\ex(n,H,F)=\cN(H,G)$, then we say that $G$ is an \emph{extremal graph} with respect to $H$ and $F$ (we simply say that $G$ is extremal if $H$ and $F$ are clear from the context).

The first result concerning $\ex(n,H,F)$ is due to Zykov \cite{zykov}, who showed that the Tur\'an graph is extremal with respect to $K_k$ and $K_r$. After several sporadic results, the systematic study of $\ex(n,H,F)$ was initiated by Alon and Shikhelman \cite{as}.

\medskip

Here we study a variant of $\ex(n,H,F)$, where $F$ is not forbidden, but we only count copies of $H$ that are far from copies of $F$ in the following sense.

\begin{defin}
We say that two subgraphs $G'$ and $G''$ of a graph $G$ \textit{align} in $G$ if $V(G')\subseteq V(G'')$ or $V(G'')\subseteq V(G')$.
\end{defin}

\begin{defin}\label{main}
Given graphs $H$, $F$ and $G$, we denote by $\cN_{\re}(H,F,G)$ the number of copies of $H$ in $G$ that do not align with any copy of $F$ in $G$. When $F$ is clear from the context, we say that a copy of $H$ is nice in $G$ if it does not align with any copy of $F$ in $G$.
Given graphs $H$ and $F$, we let $\ex_{\re}(n,H,F):=\max\{\cN_{\re}(H,F,G): \text{ $G$ is an $n$-vertex graph}\}$. 
\end{defin}

By definition, $\ex_{\re}(n,H,F)\ge \ex(n,H,F)$. We remark that the situation is very different depending on whether $H$ or $F$ has more vertices. When looking at a copy of $H$ in $G$, we either have to examine the subgraphs or the supergraphs of it to decide if it aligns with a copy of $F$. 

The study of the case $|V(H)>|V(F)|$ was suggested by the author in \cite{gerbner}. The reason is that counting copies of $H$ that do not align with any $F$ was used there to prove a generalized Tur\'an result. Let us describe now the general approach. 
Observe that an $n$-vertex $F$-free graph contains at most $\binom{n}{|V(H)|-|V(F')|}\ex(n,F',F)$ copies of $H$ that align with a copy of $F'$. Thus an upper bound on $\ex_{\re}(n,H,F')$ implies an upper bound on $\ex(n,H,F)$.

\medskip

Let us continue with a connection to another well-studied notion in extremal graph theory, namely inducibility. Let us denote by $i(H,n)$ the largest number of \emph{induced} copies of a graph $H$ in any $n$-vertex graph. If $|V(H)|\ge |V(F)|$ and $H$ does not contain $F$, then induced copies of $H$ do not align with any copy of $F$, thus $\ex_{\re}(n,H,F)\ge i(F,n)$. In particular, this shows the following. 
\begin{observation}
If $|V(H)|\ge |V(F)|$ and $H$ does not contain $F$, then we have $\ex_{\re}(n,H,F)=\Theta(n^{|V(H)|})$.
\end{observation}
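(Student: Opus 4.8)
The plan is to prove matching bounds of order $n^{|V(H)|}$; write $k=|V(H)|$ throughout. The upper bound $\ex_{\re}(n,H,F)=O(n^{k})$ is immediate and uses neither hypothesis: in any $n$-vertex graph $G$ we have $\cN_{\re}(H,F,G)\le\cN(H,G)\le k!\binom{n}{k}$, since a copy of $H$ is determined by a $k$-element subset of $V(G)$ together with a bijection onto $V(H)$, and $k!\binom{n}{k}=O(n^{k})$ for fixed $H$. So it remains to exhibit $n$-vertex graphs with $\Omega(n^{k})$ nice copies of $H$.

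For this I would use that, under the stated hypotheses, every \emph{induced} copy of $H$ is nice. Indeed, let $H'$ be an induced copy of $H$ in $G$ and suppose a copy $F'$ of $F$ in $G$ aligns with it. If $V(F')\subseteq V(H')$, then $F$ embeds into $G[V(F')]$, a subgraph of $G[V(H')]\cong H$, so $H$ contains $F$ as a subgraph, a contradiction. If $V(H')\subseteq V(F')$, then $k=|V(H')|\le|V(F')|=|V(F)|\le k$, forcing $V(H')=V(F')$, and again $F$ embeds into $G[V(H')]\cong H$, a contradiction. Hence $\cN_{\re}(H,F,G)$ is at least the number of induced subgraphs of $G$ isomorphic to $H$, and therefore $\ex_{\re}(n,H,F)\ge i(H,n)$.

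Finally, $i(H,n)=\Omega(n^{k})$ is classical, and I would prove it with the probabilistic method: let $G=G(n,1/2)$. For a fixed $k$-set $S$, the probability that $G[S]\cong H$ equals $c:=k!\,|\Aut(H)|^{-1}\,2^{-\binom{k}{2}}$, a positive constant depending only on $H$. By linearity of expectation, the expected number of $k$-sets $S$ with $G[S]\cong H$ is $c\binom{n}{k}=\Omega(n^{k})$, so some $n$-vertex graph has at least this many induced copies of $H$. (An explicit balanced blow-up of a fixed graph would serve just as well.) Together with the first paragraph this yields $\ex_{\re}(n,H,F)=\Theta(n^{k})$.

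There is no real obstacle here; the only mildly delicate point is the alignment analysis for induced copies, which is where both hypotheses enter: $H$ not containing $F$ rules out a copy of $F$ inside an induced copy of $H$, while $|V(H)|\ge|V(F)|$ rules out an induced copy of $H$ sitting inside a copy of $F$.
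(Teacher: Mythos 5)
Your argument is correct and matches the paper's own approach: the paper also observes that under these hypotheses induced copies of $H$ cannot align with any copy of $F$, so $\ex_{\re}(n,H,F)\ge i(H,n)=\Omega(n^{|V(H)|})$, with the upper bound $O(n^{|V(H)|})$ being trivial. You merely spell out the alignment case analysis and the standard fact $i(H,n)=\Theta(n^{|V(H)|})$, which the paper leaves implicit.
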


We remark that this behaviour is very different from $\ex(n,H,F)$. 

Let us assume now that $|V(H)|< |V(F)|$. Let $\cF$ denote the family of graphs obtained by adding edges (but no vertices) to $F$. If no member of $\cF$ contains $H$ as an induced subgraph, then induced copies of $H$ do not align with any copy of $F$, thus $\ex_{\re}(n,H,F)\ge i(F,n)$.
In the remaining cases when $H$ is an induced subgraph of some $F'\in \cF$, the order of magnitude of $\ex_{\re}(n,H,F)$ can be different from both $n^{|V(H)|}$ and $\ex(n,H,F)$, as shown by the following example. We denote by $P_k$ the path with $k$ vertices.

\begin{prop}\label{utak}
Let $3<k<\ell$ and $\ell-k$ odd. Then $\ex_{\re}(n,P_k,P_\ell)=\Theta(n^{k-2})$.
\end{prop}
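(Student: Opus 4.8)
The plan is to establish matching upper and lower bounds of order $n^{k-2}$.

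For the lower bound, I would exhibit an $n$-vertex graph with $\Theta(n^{k-2})$ nice copies of $P_k$. The natural candidate is to take a clique $K_{k-2}$ on a fixed set $S$ of $k-2$ vertices, together with $n-k+2$ further vertices each joined to all of $S$ (and no other edges) — equivalently, the join of $K_{k-2}$ with an independent set of size $n-k+2$. A copy of $P_k$ can be formed by taking a Hamiltonian path of $K_{k-2}\cup\{x,y\}$ for two of the independent vertices $x,y$, placing $x,y$ as the two endpoints; this gives $\Theta(n^2)\cdot\Theta(1)=\Theta(n^2)$ copies of $P_k$, which is only $n^2$, not $n^{k-2}$, so I need a richer construction. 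Instead I would let $S$ have size roughly $k-2$ but "blow up" to get the exponent right: take $t:=k-2$ and consider the complete multipartite-type graph that is the join of an independent set $I$ of size $n-O(1)$ with a small clique, tuned so that the longest path has exactly $k$ vertices and there is no $P_\ell$. The key point is that we want (i) many copies of $P_k$, of order $n^{k-2}$, and (ii) every $k$-set carrying a $P_k$ should also carry no $P_\ell$ as a super-configuration and not sit inside a $P_\ell$'s vertex set, and no $P_\ell$ on fewer vertices... but $\ell>k$, so alignment can only happen by a $P_\ell$ containing our $P_k$'s vertex set — impossible on $k<\ell$ vertices unless $G$ itself contains a $P_\ell$. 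Hence the real constraint is simply: $G$ should contain no $P_\ell$ at all, in which case \emph{every} copy of $P_k$ is nice and $\ex_{\re}(n,P_k,P_\ell)\ge \ex(n,P_k,P_\ell)$. So for the lower bound it suffices to recall the known bound $\ex(n,P_k,P_\ell)=\Theta(n^{\lfloor (k-1)/2\rfloor \cdot \text{something}})$ — but this is generally larger than $n^{k-2}$, which would contradict the Proposition. Therefore alignment \emph{must} be doing real work, and the subtlety is that a nice copy of $P_k$ must avoid being the vertex subset of a $P_\ell$; since $P_\ell$-freeness is too strong, the extremal graph \emph{does} contain copies of $P_\ell$, and we only forbid those $P_k$'s whose $k$-set extends to an $\ell$-set spanning a $P_\ell$. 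The right construction is thus: take a set $A$ of $k-2$ vertices forming a clique, a set $B$ of $2$ special vertices, and $n-k$ further vertices forming an independent set $I$ joined only to $A\cup B$ appropriately, arranged so that paths using many $I$-vertices are long (contain $P_\ell$) hence their $k$-subsets are not nice, while the short paths living essentially inside $A\cup B$ plus at most a bounded number of $I$-vertices are nice; counting those gives $\Theta(n^{k-2})$ since we may choose which $k-2$ of the $n$ vertices play the role of the "middle". The parity hypothesis $\ell-k$ odd presumably guarantees that the extension of a nice $P_k$ to a $P_\ell$ is blocked for bipartite-like reasons.

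For the upper bound, I would argue that in any $n$-vertex graph $G$, if a copy $Q$ of $P_k$ is nice, then its vertex set $V(Q)$ cannot be contained in the vertex set of any $P_\ell$ in $G$. I would try to show this forces $V(Q)$ to have a bounded-size "core" whose removal disconnects or severely limits $G$ near $Q$, so that $Q$ is determined by a bounded number of "free" vertices among $n$, each contributing a factor $n$, with the total exponent capped at $k-2$. Concretely: the two endpoints of $Q$ and one interior vertex should be forced into a small set (using that otherwise one could reroute and lengthen the path to reach $\ell$ vertices, using the $\ell-k$ odd parity to close up the construction), leaving at most $k-2$ vertices free. Summing over the bounded choices of the core and the $\binom{n}{k-2}$-many choices of free vertices yields $O(n^{k-2})$.

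The main obstacle will be the upper bound's structural step: proving that a nice $P_k$ forces three of its vertices (or however many) into a bounded set. This is where the parity condition $\ell - k$ odd must be used essentially — presumably to guarantee that a path of length between $k$ and $\ell$ can always be adjusted to hit exactly $\ell$ vertices (lengths of paths between two vertices in a graph, once long enough, typically realize all values of one parity, and the odd difference lets us bridge), so that the only way to be nice is to be "trapped" with no room to grow. Pinning down the exact combinatorial lemma — something like "if $G$ contains a $P_k$ on vertex set $W$ and $G$ has enough vertices adjacent in the right way, then $W$ extends to a $P_\ell$" — and getting the counting exponent exactly $k-2$ rather than $k-1$ or $k-3$ is the delicate part.
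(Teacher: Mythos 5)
There is a genuine gap on both sides: neither bound is actually proved, and the crucial steps you defer are exactly where the content lies. For the lower bound you never produce a concrete construction, and the sketch you do give is internally inconsistent: nice copies of $P_k$ that live ``essentially inside $A\cup B$ plus at most a bounded number of $I$-vertices'' cannot number $\Theta(n^{k-2})$ unless roughly $k-2$ of their $k$ vertices range freely over $I$, which contradicts the description. You also never verify non-alignment; you only say the parity hypothesis ``presumably'' blocks extensions, but that verification is the heart of the matter. (Your side remark that $\ex(n,P_k,P_\ell)$ is ``generally larger than $n^{k-2}$'' is false: by the result of Gy\H ori, Salia, Tompkins and Zamora it is $\Theta(n^{\lceil k/2\rceil})\le n^{k-2}$, with equality of exponents only for $k\in\{4,5\}$; the correct point is that a $P_\ell$-free construction falls short for $k\ge 6$, so the extremal graphs must contain copies of $P_\ell$.) The paper's construction blows up every vertex of a $P_k$ except the second and the $(k-1)$st to linear size; the counted paths then have degree-one endpoints, so they cannot be extended, and a $P_\ell$ on a superset of their vertex set would have to leave and re-enter the path, which by bipartiteness forces an even number of extra vertices --- contradicting $\ell-k$ odd. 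That parity argument is what you would need to supply.

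For the upper bound, your plan (force the two endpoints plus an interior vertex of a nice $P_k$ into a bounded ``core'') is both unproven and miscounted: three forced vertices plus $k-2$ free ones is $k+1$ vertices, and with three forced vertices the count would be $O(n^{k-3})$, not $O(n^{k-2})$. You also place the parity condition in the wrong half of the argument: the paper's upper bound does not use $\ell-k$ odd at all (it persists for $\ell-k$ even, as noted in the concluding remarks); parity is needed only for the lower bound. The actual upper-bound argument is different in structure: choose the $k-4$ vertices other than $v_1,v_2,v_{k-1},v_k$ in $O(n^{k-4})$ ways, and show that each of the end-edges $v_1v_2$ and $v_{k-1}v_k$ admits at most $\ell n$ choices. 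If some end-edge had more than $\ell n$ candidates, iteratively deleting low-degree vertices from the graph formed by those candidate edges leaves a nonempty subgraph of minimum degree at least $\ell$, inside which a nice $P_k$ starting with such an edge can be greedily extended to a $P_\ell$, a contradiction. Without this (or some replacement) lemma, and without a verified construction, the proposal does not establish either direction of the $\Theta(n^{k-2})$ bound.
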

The case where $\ell-k$ even is very different, as we discuss in Section \ref{concl}.
We note that for any $k$ and $\ell$, $\ex(n,P_k,P_\ell)=\Theta(n^{\lceil k/2\rceil})$ by a result of Gy\H ori, Salia, Tompkins and Zamora \cite{gstz} (who also determined the asymptotics, and in some cases they obtained exact results).

Let us return to the connection to inducibility. In some cases, every copy of $H$ that does not align with any $F$ must be induced. It is the case when $H$ is a complete $k$-partite graph (but not $K_k$) and $F=K_{k+1}$, or more generally $F$ can be obtained by adding an edge to a complete $k$-partite graph $H'$ such that the largest partite set of $H'$ has no more vertices than the smallest partite set of $H$. 

Brown and Sidorenko \cite{bs} proved the following.

\begin{thm}[Brown, Sidorenko, \cite{bs}]\label{brsi}
\textbf{(i)} If $H$ is complete multipartite, then the $n$-vertex graph containing the most induced copies of $H$ is also complete multipartite.

\textbf{(ii)} If $H$ is complete bipartite, then the $n$-vertex graph containing the most induced copies of $H$ is also complete bipartite. Moreover, if $G$ is a complete multipartite $n$-vertex graph but not bipartite, then $G$ contains at most $i(H,n)-\Theta(n^{|V(H)|})$ induced copies of $H$.
\end{thm}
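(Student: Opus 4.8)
The plan is to prove both parts by \emph{symmetrization}: starting from an $n$-vertex graph with the largest number of induced copies of $H$, I will perform local rewirings that do not decrease this number until the graph attains the required form. Throughout, write $c(G)$ for the number of induced copies of $H$ in $G$, so $i(H,n)=\max_{|V(G)|=n}c(G)$, and recall the elementary structural fact that a graph is complete multipartite if and only if it has no induced subgraph on three vertices spanning exactly one edge; equivalently, ``being non-adjacent or equal'' is an equivalence relation on its vertex set (the classes being the parts), or, equivalently again, every two non-adjacent vertices have the same open neighborhood (are \emph{false twins}).

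For part (i), suppose $G$ is an $n$-vertex graph with $c(G)=i(H,n)$, $H$ complete multipartite, and that $G$ is not complete multipartite. Then there are vertices $u,v,w$ with $uv,vw\notin E(G)$ and $uw\in E(G)$. Form $G'$ from $G$ by making $v$ a false twin of $u$ (replace $N(v)$ by $N(u)$, keeping $uv\notin E$), and form $G''$ symmetrically by making $u$ a false twin of $v$. I claim $\max\{c(G'),c(G'')\}\ge c(G)$. Split the induced copies of $H$ in $G,G',G''$ by how they meet $\{u,v\}$, and write $\delta,\alpha,\beta,\gamma$ for the numbers (in $G$) of copies meeting neither, only $u$, only $v$, and both, so $c(G)=\delta+\alpha+\beta+\gamma$. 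Copies meeting neither are unchanged in all three graphs. Copies meeting only one of $u,v$ use only that vertex's neighborhood, which for $u$ is unchanged in $G'$ and for $v$ is unchanged in $G''$; and since false twins occupy interchangeable positions, in $G'$ the number of copies through $v$ equals that through $u$, namely $\alpha$, and likewise in $G''$ the number through $u$ equals $\beta$. Copies meeting both $u$ and $v$ place these non-adjacent vertices in the same part of $H$, which forces them to have equal neighborhoods inside the copy; hence a copy $\{u,v\}\cup R$ valid in $G$ (so that $N_G(u)\cap R=N_G(v)\cap R$) stays valid in both $G'$ and $G''$, giving $\gamma'\ge\gamma$ and $\gamma''\ge\gamma$ for the corresponding counts. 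Therefore $c(G')=\delta+2\alpha+\gamma'\ge\delta+\alpha+\beta+\gamma$ whenever $\alpha\ge\beta$, and $c(G'')=\delta+2\beta+\gamma''\ge\delta+\alpha+\beta+\gamma$ whenever $\beta\ge\alpha$, which proves the claim. Replacing $G$ by whichever of $G',G''$ remains extremal turns $\{u,v\}$ into a false-twin pair (and destroys the chosen induced three-vertex-one-edge configuration); iterating this move, and invoking a termination argument that controls a suitable potential so the process cannot continue indefinitely, yields a complete multipartite graph that is still extremal, which is part (i).

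For part (ii), by part (i) we may take the extremal graph to be complete multipartite, $G=K_{n_1,\dots,n_m}$, and $H=K_{a,b}$ with $a\le b$ and $b\ge 2$ (the case $H=K_2$ being trivial). Since an induced copy of $K_{a,b}$ inside a complete multipartite graph occupies exactly two of its parts — two vertices of the copy lie in the same host part iff they are non-adjacent iff they lie in the same part of $K_{a,b}$ — the number of induced copies of $H$ in $K_{n_1,\dots,n_m}$ equals $\sum_{i\ne j}\binom{n_i}{a}\binom{n_j}{b}$ if $a<b$ and $\sum_{i<j}\binom{n_i}{a}\binom{n_j}{a}$ if $a=b$. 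It remains to show this is maximized, over all compositions $n=n_1+\dots+n_m$, when $m=2$, and then to identify the optimal split. For the first part I would merge parts one at a time: replacing two parts of sizes $x$ and $y$ by one part of size $x+y$, the superadditivity $\binom{x+y}{a}\ge\binom{x}{a}+\binom{y}{a}$ (from $\binom{x+y}{a}=\sum_s\binom{x}{s}\binom{y}{a-s}$) makes every cross-term between the merged part and a third part grow by at least the sum of the two terms it replaces, while only the single cross-term between the two merged parts is lost; a short calculation shows that for a suitable choice of the two parts to merge the net change is nonnegative, so one descends to $m=2$. Finally, maximizing $\binom{n_1}{a}\binom{n-n_1}{b}+\binom{n_1}{b}\binom{n-n_1}{a}$ over $n_1$ by a log-concavity/smoothing argument identifies the optimal complete bipartite graph (balanced when $a=b$). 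A quantitative version of the gain produced in the merging step yields the ``moreover'' statement, by separating any complete multipartite but non-bipartite $n$-vertex graph from $i(H,n)$.

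The hard part will be the symmetrization in part (i): unlike for ordinary (not-necessarily-induced) subgraph counts, a single rewiring can both create and destroy induced copies of $H$, so one must argue carefully — as in the four-way split above — that the twin-move never loses copies, and also that the iteration terminates, which needs a well-chosen monovariant rather than $c(G)$ itself (since both of $c(G'),c(G'')$ may merely equal $c(G)$). For part (ii) the residual work is the elementary but not entirely automatic extremal estimate that two parts beat three or more, together with the quantitative version underlying the ``moreover'' clause.
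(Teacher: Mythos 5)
You are proving a statement that the paper itself does not prove: Theorem~\ref{brsi} is imported from Brown and Sidorenko \cite{bs} (with the ``moreover'' clause asserted to follow from their proof), so there is no in-paper argument to compare against; the closest analogue is the paper's own proof of Theorem~\ref{zyk2}, which runs exactly your Zykov-type symmetrization. Your one-step inequality in part (i) is correct and is essentially that argument specialized to induced copies of a complete multipartite $H$: the four-way split $\delta,\alpha,\beta,\gamma$ is counted correctly, copies through both $u$ and $v$ survive the twin move because non-adjacent vertices of an induced complete multipartite copy have equal neighbourhoods inside it, and the twin bijection gives $2\max\{\alpha,\beta\}$ for the copies through exactly one of $u,v$. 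The genuine gap is termination, which you explicitly defer to ``a suitable potential'': this is not a routine detail. The copy count can stay constant for every available move, and natural monovariants (number of false-twin pairs, edge counts, degree sequences) are not monotone under the switch, so the iteration could a priori cycle among extremal graphs. The paper needs a full separate claim to handle this in Theorem~\ref{zyk2} — a fixed vertex ordering, a tie-breaking rule for which vertex is symmetrized to which, and an induction bounding how many times each $v_i$ can be re-symmetrized between strict increases of the count. Until you supply an argument of this kind (or replace iteration by choosing an extremal graph optimizing a secondary quantity that the move strictly improves), part (i) is not proved.

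Part (ii) has two further problems. First, the reduction to maximizing $\sum_{i\ne j}\binom{n_i}{a}\binom{n_j}{b}$ over part sizes is fine, but the ``short calculation'' showing that some merge of two parts is never losing is omitted, and it is the actual content: the choice of which parts to merge matters (for $K_{m,m,1}$ and $H=K_{2,2}$, merging the two large parts destroys all copies, while merging a large part with the singleton gains), so one must exhibit and verify a specific rule, not just superadditivity of $\binom{x+y}{a}$. Second, and more seriously, the plan ``a quantitative version of the gain produced in the merging step yields the moreover statement'' cannot work as written: the gain from merging small parts can be as low as $O(n^{|V(H)|-2})$, and in fact the quantitative clause fails at face value for graphs with tiny extra parts. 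For $H=K_{2,2}$ and $G=K_{m,m,1,1}$ with $n=2m+2$, $G$ is complete multipartite and not bipartite, yet it contains $\binom{m}{2}^2=i(H,n)-\Theta(n^{3})$ induced copies, not $i(H,n)-\Theta(n^{4})$. So the ``moreover'' part needs either additional hypotheses (for instance that at least three parts have size $\Omega(n)$) or a weaker error term, and in any case a sharper argument than quantitative merging; as stated, no proof of it can succeed, which is worth flagging since the paper invokes exactly this clause in the proof of Theorem~\ref{getu}.
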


We remark that they do not state the moreover part of $\textbf{(ii)}$, but it follows from their proof and we will use it later. This result implies that $\ex_{\re}(n,H,K_{k+1})$ is given by some complete multipartite graph if $H$ is complete $k$-partite but not $K_k$. In particular $\ex_{\re}(n,K_{a,b},K_3)=\cN(K,T)$ for some complete bipartite $n$-vertex graph $T$ (this clearly holds also when $K_{a,b}$ is the complete graph $K_2$). The missing case $\ex_{\re}(n,K_k,K_{k+1})$ is also known: it is the largest number of maximal cliques of size $k$. Byskov \cite{byskov} showed that the Tur\'an graph $T_k(n)$ contains the most such cliques.

We can extend the above results to $\ex_{\re}(n,H,K_{k+1})$ if $H$ is complete $k'$-partite for any $k'$.

\begin{thm}\label{zyk}
Let $K$ be a complete multipartite graph. Then $\ex_{\re}(n,K,K_{k+1})=\cN_{\re}(K,K_{k+1},T)$ for a complete $r$-partite graph $T$ for some $r\ge k$. 
\end{thm}

We remark that $r$ does not have to be equal to $k$, similarly to Theorem \ref{brsi}. We will prove a more general version where we count multiple graphs, with weights, at the same time. We will use it later.

\begin{thm}\label{zyk2}
Let $H_1,\dots,H_\ell,H'_1,\dots,H'_m$ be complete multipartite graphs, $\alpha_i\ge 0$ 
and $\beta_j\ge 0$. 
Then among $n$-vertex graphs $G$, $x(G):=\sum_{i=1}^\ell \alpha_i\cN_{\re}(H_i,K_{k_i+1},G)+\sum_{j=1}^m\beta_j \cN(H'_j,G)$ is maximized by a complete multipartite graph $T$. 
\end{thm}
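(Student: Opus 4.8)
The plan is to prove Theorem~\ref{zyk2} by a symmetrization (Zykov-style) argument, showing that any $n$-vertex graph $G$ maximizing $x(G)$ can be transformed into a complete multipartite graph without decreasing $x$. The central observation is that the ``non-aligning'' condition for a copy of a complete multipartite $H_i$ relative to $K_{k_i+1}$ is, as noted in the excerpt, essentially an \emph{induced-subgraph} condition together with a local clique constraint: a copy of $H_i$ on a vertex set $S$ is nice exactly when no $k_i+1$ vertices of $G$ inside $S$ form a clique (no copy of $K_{k_i+1}$ is contained in $S$) and no clique $K_{k_i+1}$ of $G$ has $S$ contained in its vertex set (impossible once $|S|>k_i+1$, and when $|S|\le k_i+1$ it just says $S$ is not inside a bigger clique). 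So both the counting functions $\cN_\re(H_i,K_{k_i+1},G)$ and $\cN(H'_j,G)$ are determined by the induced subgraph structure, and the quantity $x(G)$ can be written as $\sum_{S} w(G[S])$ where $S$ ranges over vertex subsets of bounded size and $w$ is a fixed nonnegative weight depending only on the isomorphism type of the induced subgraph $G[S]$ (and the counting of induced copies absorbs the alignment condition).

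First I would set up the Zykov symmetrization: pick two non-adjacent vertices $u,v$ in an extremal $G$ and replace the neighborhood of one by the neighborhood of the other (``cloning''), choosing whichever of $u\mapsto v$ or $v\mapsto u$ does not decrease $x$; iterating, non-adjacency becomes an equivalence relation and one reaches a complete multipartite graph. The standard way to see a single cloning step does not hurt is a convexity/averaging argument: fixing all vertices other than $u,v$ and letting $A = x(G - v)$-type contributions, the value $x(G)$ as a function of the ``choice'' at $u$ versus $v$ is, for each relevant subset $S$ containing at most one of $u,v$, unchanged, and for subsets $S$ containing both $u$ and $v$ the contribution depends only on whether $u\sim v$ (fixed, they are non-adjacent) and on the common structure; writing $x(G) = c + f(u) + f(v) + g(u,v)$ where $f(u)$ collects the weight of subsets meeting $\{u,v\}$ only in $u$, one gets $\max(x(G[u\to v]), x(G[v\to u])) \ge \tfrac12(x(G[u\to v])+x(G[v\to u]))$, and the key point is that $g(u,v) \le g(u,u)$ and $g(u,v)\le g(v,v)$ — i.e. making the two vertices twins can only help — which must be checked using that all the $H_i, H'_j$ are complete multipartite and all weights are nonnegative. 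This monotonicity under ``twinning'' is the crux; it is where Theorem~\ref{brsi}(i) of Brown–Sidorenko morally lives, and indeed I expect to invoke or re-derive its mechanism: for complete multipartite $H$, a set $S$ with $G[S]$ containing an induced copy of $H$ is compatible with, and counting is non-decreasing under, identifying the types of two non-adjacent vertices.

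The main obstacle, and the step needing genuine care rather than routine bookkeeping, is handling the nice-copy counts $\cN_\re(H_i, K_{k_i+1}, G)$ under symmetrization, because ``nice'' is a \emph{global} condition: whether a given copy of $H_i$ on $S$ is nice depends on the existence of a $K_{k_i+1}$ elsewhere in $G$ overlapping $S$ appropriately — but actually the alignment condition only involves sub/supersets of $S$, so it is determined by $G[S']$ for $S'\supseteq S$ or $S'\subseteq S$ of bounded size, hence still ``local'' in the bounded-subset sense. The real subtlety is that cloning $v$ onto $u$ creates new cliques (since $u$ and $v$ become adjacent to the same vertices but $u\not\sim v$, no new clique through both, but new cliques may appear through the cloned neighborhoods) which could \emph{destroy} niceness of copies elsewhere, or conversely a copy of $H_i$ might newly become nice. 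I would argue that, because $u\not\sim v$ after cloning as before, and because $H_i$ is complete multipartite so any copy of $H_i$ hitting both $u$ and $v$ forces $u,v$ into the same part, one can match up nice copies before and after with a weight-monotone injection, using nonnegativity of the $\alpha_i$. Once all these twinning-monotonicity lemmas are in place, the iteration terminates at a complete multipartite $T$ with $x(T)\ge x(G)$, and among complete multipartite graphs on $n$ vertices (finitely many partition types) a maximizer exists, completing the proof; Theorems~\ref{zyk} and the $\ex_\re(n,K_{a,b},K_3)$ statements then follow as the special cases $\ell=1$, $m=0$ (with the additional Brown–Sidorenko refinement used to pin down bipartiteness when $F=K_3$).
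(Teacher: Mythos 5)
Your overall route (Zykov symmetrization, iterate until non-adjacency is an equivalence relation) is the same as the paper's, but two essential ingredients are missing and one foundational claim is wrong. The claim that $x(G)$ can be written as $\sum_S w(G[S])$ with $S$ ranging over bounded-size vertex sets and $w$ depending only on the isomorphism type of $G[S]$ is false in the generality of the theorem: a copy of $H_i$ on $S$ is nice only if no $K_{k_i+1}$ of $G$ \emph{contains} $S$, and this depends on vertices outside $S$; it is an existential condition whose negation is not a fixed linear combination of induced-subgraph counts. Already for $H_i=K_2$ and $k_i=2$, the quantity $\cN_{\re}(K_2,K_3,G)$ counts edges lying in no triangle, which is not determined by bounded-size induced-subgraph statistics. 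As a consequence, your decomposition $x(G)=c+f(u)+f(v)+g(u,v)$ is not well-posed as a basis for the averaging step: the term $c$, coming from copies avoiding both $u$ and $v$, is \emph{not} invariant under symmetrization, because the new edges at $u$ can create cliques $K_{k_i+1}$ through $u$ that destroy niceness of copies disjoint from $u$ (and likewise the transplanted copies through $u$ might fail to be nice). You flag exactly this subtlety, but your proposed fix (``a weight-monotone injection'') is only asserted, and this is precisely where the content of the proof lies.

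The missing argument, which the paper supplies, is a single exchange lemma: after symmetrizing $u$ to $v$, any copy of $K_{k_i+1}$ present in $G'$ but not in $G$ must contain $u$, hence (since $u\not\sim v$ in $G'$ and $N_{G'}(u)=N_G(v)$) cannot contain $v$; replacing $u$ by $v$ in it produces a $K_{k_i+1}$ in $G$ that aligns with the relevant copy of $H_i$ (or with the copy obtained by undoing the $v\mapsto u$ relabelling), contradicting niceness in $G$. This one argument handles simultaneously the copies containing both $u$ and $v$, the copies transplanted from $v$ to $u$, and the copies avoiding $u$, and it is what makes the inequality $x(G')\ge x(G)$ (with $d^*(u)\le d^*(v)$, or your averaged version) true. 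Separately, termination of the iteration is not automatic and you take it for granted: steps in which $d^*(u)=d^*(v)$ do not increase $x$, so the process could a priori cycle; the paper fixes a vertex ordering, breaks ties by the smallest index of a vertex with the same neighbourhood, and proves by induction a finite bound on the number of symmetrizations between strict increases. Without the exchange lemma and a termination argument, your outline does not yet constitute a proof.
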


Clearly, Theorem \ref{zyk} is implied by Theorem \ref{zyk2}, except that in the former theorem we state that $T$ has at least $k$ parts. But that is trivial, as if $T$ had less than $k$ parts, we could make it $k$-partite by adding edges without creating any copy of $K_{k+1}$.

Let us return to generalized Tur\'an problems. Gy\H ori, Pach and Simonovits \cite{gypl} showed that if $K$ is complete multipartite, then $\ex(n,K,K_{k+1})=\cN(K,T)$ for some complete $k$-partite graph $T$. Schelp and Thomasson \cite{scth} (extending a result of Bollob\'as \cite{boll}) showed that if we add up different complete multipartite graphs with weights, where the weight is non-negative unless the corresponding graph is complete, then again a complete multipartite graph is extremal. See \cite{gerbner3} for more generalized Tur\'an results where we count multiple graphs.

The proofs of most of the results mentioned above use the same technique, Zykov symmetrization \cite{zykov}, that we will also use in the proof of Theorem \ref{zyk2}. This makes the graph more symmetric (and eventually complete multipartite) without creating copies of $K_k$. Thus we cannot use it in the case of other forbidden graphs $F$. 

We know that in generalized Tur\'an problems, forbidding $K_k$ or forbidding any $k$-chromatic graph is closely related by the following result.
\begin{thm}[Gerbner and Palmer \cite{gp2}]\label{gepa}
Let $H$ be a graph and $F$ be a graph with chromatic number $k$, then $\ex(n,H,F)\le \ex(n,H,K_k)+o(n^{|V(H)|})$.
\end{thm}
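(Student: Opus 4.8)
The plan is to deduce the statement from the clique version of the generalized Tur\'an problem, realising the Erd\H os--Stone--Simonovits philosophy at the level of $H$-counts. Write $t:=|V(F)|$ and let $K_k(t)$ denote the complete $k$-partite graph with all parts of size $t$. Since $\chi(F)=k$, a proper $k$-colouring splits $V(F)$ into $k$ independent sets, each of size at most $t$, so placing them into the parts of $K_k(t)$ exhibits $F$ as a subgraph of $K_k(t)$. Hence any $F$-free graph is $K_k(t)$-free. Fix an $n$-vertex $F$-free graph $G$ with $\cN(H,G)=\ex(n,H,F)$; it then suffices to show $\cN(H,G)\le \ex(n,H,K_k)+o(n^{|V(H)|})$, i.e.\ that $G$ is ``close'' to being $K_k$-free in the sense that deleting few edges kills all $K_k$'s.

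The first step is to show that a $K_k(t)$-free graph contains only $o(n^k)$ copies of $K_k$. I would view the vertex sets of the $k$-cliques of $G$ as the edges of a $k$-uniform hypergraph $\cH$ on $V(G)$. If $G$ had at least $\delta n^k$ copies of $K_k$, then $\cH$ would have at least $\delta n^k$ hyperedges, and by the supersaturation form of Erd\H os's theorem on complete $k$-partite $k$-uniform hypergraphs (for $n$ large) $\cH$ would contain pairwise disjoint sets $A_1,\dots,A_k$ of size $t$, every transversal of which is a hyperedge. Extending any cross pair $a_i\in A_i,\ a_j\in A_j$ to a full transversal shows $a_ia_j\in E(G)$, so $A_1,\dots,A_k$ span a $K_k(t)$ in $G$, contradicting $K_k(t)$-freeness. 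Thus $\cN(K_k,G)=o(n^k)$.

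With few cliques in hand I would apply the $K_k$-removal lemma: given $\epsilon>0$, for $n$ large the bound $\cN(K_k,G)=o(n^k)\le \delta(\epsilon)n^k$ lets me delete at most $\epsilon n^2$ edges from $G$ to obtain a $K_k$-free graph $G'$. Every copy of $H$ present in $G$ but not in $G'$ must use one of the deleted edges, and a fixed edge lies in at most $2|E(H)|\,n^{|V(H)|-2}$ copies of $H$ (choose which edge of $H$ maps onto it, with orientation, then the remaining $|V(H)|-2$ vertices). Hence $\cN(H,G)\le \cN(H,G')+2|E(H)|\,\epsilon\, n^{|V(H)|}\le \ex(n,H,K_k)+2|E(H)|\,\epsilon\,n^{|V(H)|}$, using that $G'$ is $K_k$-free on $n$ vertices. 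Letting $\epsilon\to 0$ (slowly as $n\to\infty$) then yields the claim.

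The main obstacle is the quantitative translation in the second step: the content is precisely that $F$-freeness, a priori a statement about a single forbidden configuration, forces a $K_k$-count that is negligible on the scale $n^k$, and this is where the hypergraph supersaturation does the real work. Once that is secured, the removal lemma and the edge-by-edge counting are routine, and the only care needed is to couple the removal parameter $\epsilon$ with $n$ so that the accumulated error is genuinely $o(n^{|V(H)|})$.
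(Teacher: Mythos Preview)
The paper does not contain a proof of this statement; it is quoted as a result of Gerbner and Palmer \cite{gp2} and used as a black box. Your argument is correct and is essentially the original proof from \cite{gp2}: embed $F$ into a blown-up clique $K_k(t)$, deduce that an $F$-free graph has $o(n^k)$ copies of $K_k$ (via Erd\H os's hypergraph result / supersaturation), apply the $K_k$-removal lemma to delete $o(n^2)$ edges and obtain a $K_k$-free graph, and bound the number of copies of $H$ destroyed by $o(n^2)\cdot O(n^{|V(H)|-2})=o(n^{|V(H)|})$.
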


If $F$ has chromatic number $k$ and $K$ has chromatic number less than $k$, then Theorem \ref{gepa} gives an asymptotic result for $\ex(n,K,F)$. Theorem \ref{zyk} gives an improvement in the error term in some cases: if $\ex_{\re}(n,K,K_{k+1})=\ex(n,K,K_{k+1})$ (which is the case if $k=2$), then we have $\ex(n,K,F)\le \ex(n,K,K_{k+1})+O(n^{|V(K)|-k-1})\ex(n,K_{k+1},F)$. Indeed, in an $F$-free graph, we can count the copies of $K$ that align with a $K_{k+1}$ by picking a $K_{k+1}$ and then picking the other $|V(K)|-k-1$ vertices. For some $F$, this gives an improvement in the error term.

We can use Theorem \ref{zyk2} to give an exact bound in some cases.
We say that an edge $e$ of a graph is color-critical, if by deleting $e$ we obtain a graph with smaller chromatic number. In extremal problems, $k$-chromatic graphs with a color-critical edge often behave very similarly to $K_k$.

\begin{thm}\label{getu} Let $F$ be a 3-chromatic graph with a color-critical edge and $a,b\ge 2$. Then for large enough $n$ we have that
$\ex(n,K_{a,b},F)=\ex(n,K_{a,b},K_3)=\cN(K_{a,b},K_{t,n-t})$ for some $t$.
\end{thm}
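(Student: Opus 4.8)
The plan is to combine the generalized Tur\'an bound for $\ex(n,K_{a,b},K_3)$, the non-aligning machinery of Theorem \ref{zyk2}, and the structure of $3$-chromatic graphs with a color-critical edge. Write $F$ with $|V(F)|=f$, and let $e=xy$ be a color-critical edge, so $F-e$ is bipartite. First I would record the upper bound scaffolding already sketched in the excerpt: if $G$ is an $n$-vertex $F$-free graph, then every copy of $K_{a,b}$ in $G$ either is ``nice'' (does not align with any copy of $F'$, where I will take $F'$ to be some suitable subgraph of $F$ that $K_{a,b}$ can align with) or aligns with a copy of $F'$, and the number of the latter is at most $\binom{n}{f'-(a+b)}\ex(n,F',F)$ if $a+b\le f'$, or controlled similarly if $K_{a,b}$ contains $F'$. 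Since $K_{a,b}$ is bipartite and $F$ is $3$-chromatic, $K_{a,b}$ cannot contain $F$, so the relevant alignments are with proper subgraphs $F'\subsetneq F$ on at most $f-1$ vertices; for those, $\ex(n,F',F)=O(n^{|V(F')|-1})$ is not automatic, so instead I would want to bound the whole aligning contribution by $O(n^{a+b-1})$ directly, which is the real content.

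The key step is therefore: show $\ex_{\re}(n,K_{a,b},F)\le \cN(K_{a,b},K_{t,n-t})+O(n^{a+b-1})$ for the optimal $t$, and separately show the aligning copies number $O(n^{a+b-1})$, so that together $\ex(n,K_{a,b},F)\le \ex_{\re}(n,K_{a,b},F)+\text{(aligning copies in the extremal $F$-free graph)}$ gives the matching upper bound up to lower-order terms — and then bootstrap to an exact equality for large $n$ by a stability argument. Concretely, I would first use Theorem \ref{gepa} to get $\ex(n,K_{a,b},F)=\ex(n,K_{a,b},K_3)+o(n^{a+b})$, then invoke a stability version: any $F$-free $G$ with $\cN(K_{a,b},G)$ close to the maximum must be ``close'' to a complete bipartite graph $K_{t,n-t}$ (this should follow from the known stability for $\ex(n,K_{a,b},K_3)$ together with Theorem \ref{gepa}-type removal). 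Having located $G$ near $K_{t,n-t}$, the color-critical edge enters: if $G$ contains even one edge inside one of the two near-parts, then — because $F-e$ is bipartite and embeds into a large complete bipartite graph with $x,y$ going to the two sides — that one ``bad'' edge together with abundant vertices on both sides would complete a copy of $F$, contradicting $F$-freeness once $n$ is large. Hence $G$ must be exactly bipartite (after cleaning up a bounded number of exceptional vertices), i.e. a subgraph of some $K_{s,n-s}$, and among complete bipartite graphs the count $\cN(K_{a,b},K_{t,n-t})$ is maximized at the known balanced-ish $t$; adding any further edges is impossible, so $G=K_{t,n-t}$ and equality holds.

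Where does Theorem \ref{zyk2} come in? I expect it to be used to pin down that the non-aligning optimum among all $n$-vertex graphs is itself a complete multipartite graph, which then lets me argue that the extremal configuration for $\ex_{\re}(n,K_{a,b},F)$ — and hence the dominant term in $\ex(n,K_{a,b},F)$ — is complete multipartite, and then the color-critical edge forces it down to bipartite. More precisely: take $\ell=1$, $H_1=K_{a,b}$, $k_1=2$, all $\beta_j$ absent, so Theorem \ref{zyk2} says $\cN_{\re}(K_{a,b},K_3,G)$ is maximized by a complete multipartite $T$; but a nice copy of $K_{a,b}$ in a complete multipartite graph cannot see a $K_3$ above or below it, which (by Theorem \ref{brsi}(ii) and the moreover part) forces the optimal $T$ to be bipartite up to a $\Theta(n^{a+b})$ loss, hence $\ex_{\re}(n,K_{a,b},K_3)=\cN(K_{a,b},K_{t,n-t})=\ex(n,K_{a,b},K_3)$. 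Combined with the observation that in an $F$-free graph the copies of $K_{a,b}$ aligning with any fixed $F'\subsetneq F$ number $O(n^{a+b-1})$ (pick a copy of the single edge or the $K_3$-free subgraph and the bounded remainder), we get $\ex(n,K_{a,b},F)\le \ex(n,K_{a,b},K_3)+O(n^{a+b-1})$, and the lower bound $\ex(n,K_{a,b},F)\ge \ex(n,K_{a,b},K_3)=\cN(K_{a,b},K_{t,n-t})$ is immediate since $K_{t,n-t}$ is $3$-chromatic-free hence $F$-free.

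The main obstacle, I expect, is upgrading the ``up to $O(n^{a+b-1})$'' statement to an \emph{exact} equality for all large $n$: this requires a clean stability/exact argument showing that the $F$-free extremal graph is \emph{exactly} $K_{t,n-t}$ and not merely close to it. The color-critical-edge trick handles the qualitative step (no edge inside a part), but one must also rule out subtler near-extremal graphs — e.g. $K_{t,n-t}$ minus a few edges plus a few edges elsewhere, or unbalanced bipartitions — by a direct counting comparison, showing that any deviation strictly decreases $\cN(K_{a,b},\cdot)$ once $n$ is large. This is a finite-surgery argument, routine in spirit but needing care, and it is the part I would expect to occupy most of the proof; I would model it on the exact-result portions of \cite{gstz} or \cite{gypl}.
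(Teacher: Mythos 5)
Your route (asymptotic bound via Theorem \ref{gepa}, then a stability step, then a color-critical-edge cleanup to force exact bipartiteness) is genuinely different from the paper's, but its two load-bearing steps are exactly the ones you leave unproven. First, the stability statement you invoke --- every $F$-free graph with close to the maximum number of copies of $K_{a,b}$ is close in edit distance to $K_{t,n-t}$ --- is not available from anything cited in the paper and is not a formal consequence of Theorem \ref{gepa}; establishing it is comparable in difficulty to the theorem itself, so ``this should follow'' is a gap, not a step. Second, even granting stability, the claim that a single edge inside a near-part completes a copy of $F$ requires the endpoints of that edge to have large codegree into both parts, which edit-distance closeness does not provide; after you ``clean up a bounded number of exceptional vertices'' you only constrain the cleaned graph, and the exceptional vertices can still carry $\Theta(n^{a+b-1})$ copies of $K_{a,b}$, so the upgrade to exact equality --- which you yourself flag as the main obstacle and defer as ``routine finite surgery'' --- is precisely the missing content. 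A smaller but real problem: your bound ``the aligning copies number $O(n^{a+b-1})$, pick a copy of the single edge or the $K_3$-free subgraph and the bounded remainder'' is not a proof; what comes for free is only (number of triangles of $G$) times $O(n^{a+b-3})$, and bounding the triangle count of an $F$-free graph is itself nontrivial (for $F=B_2$ it is $n^{2-o(1)}$).

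The paper avoids stability altogether by splitting on the size $t$ of the largest book $B_t$ in the $F$-free graph $G$. If $t=o(n)$, every triangle lies in at most $\alpha=o(n^{a+b-3})$ copies of $K_{a,b}$ aligning with it, so $\cN(K_{a,b},G)\le \cN_{\re}(K_{a,b},K_3,G)+\alpha\cN(K_3,G)$; this \emph{weighted} combination is exactly why Theorem \ref{zyk2} is stated for several graphs with weights (your single-term application, essentially Theorem \ref{zyk}, does not suffice here), and maximizing it over all $n$-vertex graphs yields a complete multipartite $T$, which by the ``moreover'' clause of Theorem \ref{brsi} must be bipartite, since a third part costs $\Omega(n^{a+b})$ while $\alpha\cN(K_3,T)=o(n^{a+b})$; for bipartite $T$ the triangle term vanishes and one reads off the exact bound with no stability argument. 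If $t=\Theta(n)$, the color-critical edge is used in the opposite direction to your sketch: since $F$ embeds in $K_{k,k+2}$ plus an edge inside the larger side, there is no $K_{k,k}$ between the pages $U$ of the book and $U\cup U'$, so K\H ov\'ari--S\'os--Tur\'an gives $o(n^2)$ edges inside $U$ and between $U$ and $U'$, and a direct comparison (take near-extremal bipartite graphs on the two pieces and join their larger sides, gaining $\Theta(n^{a+b})$ copies) shows such a $G$ is simply not extremal. You would need to either prove the stability and exactness steps you assume, or switch to an argument of this two-case type.
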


Note that in the case both $a$ and $b$ are large enough, in $F$-free graphs every copy of $K_{a,b}$ is induced, thus Theorem \ref{brsi} implies the above theorem in that case. We have mentioned that in \cite{gerbner}, a result that fits in our setting was used to prove a sharp bound for a generalized Tur\'an problem. In more details, $F$ was a $(k+1)$-chromatic graph with a color-critical edge such that $\ex(n,K_{k+1},F)=o(n^k)$. $H$ was a member of a large class of graphs, that includes very balanced complete $k$-partite graphs, and the extremal graph was $T_k(n)$. For those graphs $H$ in the case $k=2$, and any 3-chromatic graph $F$ with a color-critical edge (without any assumption on $\ex(n,K_{k+1},F)$), Theorem \ref{getu} shows that $T_2(n)$ is the extremal graph.



\smallskip

Let us show a simpler example where $\ex_{\re}(n,H,F)=\ex(n,H,F)$. Gerbner and Patk\'os \cite{gepat} showed that if $2<t<t'$, then $\ex(n,K_{2,t},K_{2,t'})=(1+o(1))\binom{t'-1}{t}\binom{n}{2}$. The upper bound is proved by taking 2 vertices $u$ and $v$, $\binom{n}{2}$ ways, and  then counting the copies of $K_{2,t}$ where $u$ and $v$ form the smaller part. As they have at most $t'-1$ common neighbors, there are at most $\binom{t'-1}{t}$ such copies of $K_{2,t}$. In our case, we can proceed the same way: if $u$ and $v$ have at most  $t'-1$ common neighbors, we get the same upper bound. If $u$ and $v$ have at least $t'$ common neighbors, then every copy of $K_{2,t}$ where $u$ and $v$ form the smaller part aligns with a $K_{2,t'}$, thus we get the upper bound 0.

One of the most studied generalized Tur\'an problems is $\ex(n,K_3,C_{\ell})$. Let $\cC_\ell=\{C_3,C_4,\dots,C_\ell\}$. Gy\H ori and Li \cite{LiGy} showed that $\ex(n,K_3,C_{2k+1})=O(\ex(n,C_{2k}))$ and $\ex(n,K_3,C_{2k+1})=\Omega(\ex(n,\cC_{2k}))$ (their proof also shows the same bounds for $\ex(n,C_3,C_{2k})$). A theorem of Bondy and Simonovits \cite{bosi} states that $\ex(n,C_{2k})=O(n^{1+1/k})$. A conjecture of Erd\H os and Simonovits \cite{ersim} states that this bound is sharp, moreover $\ex(n,\cC_{2k})=\Theta(n^{1+1/k})$. This is known only in the cases $k=2,3,5$, see e.g. \cite{luw2}.

We can extend the result of Gy\H ori and Li to our setting.

\begin{prop}\label{korok} For any $k\ge 2$ we have that
$\ex_{\re}(n,K_3,C_{2k+1})=O(n^{1+1/k})$.
\end{prop}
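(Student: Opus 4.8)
The plan is to adapt the Győri–Li argument to the non-aligning setting. Recall that Győri and Li prove $\ex(n,K_3,C_{2k+1}) = O(\ex(n,C_{2k}))$ by the following mechanism: in a $C_{2k+1}$-free graph, take a maximal collection of edge-disjoint triangles (a "triangle book" type argument), and show that if one removes a single edge from each triangle in this collection, the remaining graph is $C_{2k}$-free (or close to it); this caps the number of such triangles by $\ex(n,C_{2k}) = O(n^{1+1/k})$, and then one controls the total number of triangles by bounding how many triangles can share an edge. In our setting $C_{2k+1}$ is not forbidden, but we only count triangles that do not align with any copy of $C_{2k+1}$; since $|V(K_3)| = 3 < 2k+1 = |V(C_{2k+1})|$ for $k \ge 2$, a triangle is nice precisely when its vertex set is not contained in the vertex set of any $(2k+1)$-cycle.

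\smallskip

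\noindent
\textbf{First I would} set up the edge-disjoint triangle packing restricted to nice triangles. Let $G$ be an $n$-vertex graph and let $\mathcal{T}$ be a maximal set of pairwise edge-disjoint nice triangles in $G$. The key structural claim to establish is: if $T = xyz$ is a nice triangle, then for each edge of $T$, say $xy$, any path of length $2k-2$ from $x$ to $y$ in $G$ avoiding $z$ would close up with the edge $xy$... no — rather, the relevant fact is that $x,y,z$ together with a short path between two of them cannot form a $C_{2k+1}$, because $T$ is nice. Concretely, if there were a path $P$ of length $2k-2$ between $x$ and $y$ with interior disjoint from $\{z\}$ and from $\{x,y\}$, then $x,P,y,z,x$ would be a closed walk of length $2k+1$; one needs its vertex set to contain $\{x,y,z\}$ and to host an actual $C_{2k+1}$, contradicting niceness of $T$. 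So niceness forbids certain short connections, which is exactly the kind of girth-type restriction that the Bondy–Simonovits bound exploits.

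\smallskip

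\noindent
\textbf{The main step} is then to show $|\mathcal{T}| = O(n^{1+1/k})$ by a Bondy–Simonovits style argument: form an auxiliary graph $G'$ by selecting one edge from each triangle in $\mathcal{T}$, suitably, so that $G'$ has $\Theta(|\mathcal{T}|)$ edges and no $C_{2k}$ (or no cycle of length in a bounded window around $2k$), hence $|E(G')| = O(n^{1+1/k})$. Here niceness of every $T \in \mathcal{T}$ is what guarantees that a $C_{2k}$ in $G'$ cannot exist — or if it does, it can be rerouted through a triangle vertex to produce a $C_{2k+1}$ containing the triangle's vertex set, contradicting niceness. After bounding $|\mathcal{T}|$, I would bound the total count of nice triangles: every nice triangle not in $\mathcal{T}$ shares an edge with some $T \in \mathcal{T}$ (by maximality), and each edge of $G$ lies in at most... here one needs that an edge lies in few nice triangles, or more crudely, that the nice triangles through a fixed edge $uv$ correspond to common neighbors $w$ of $u,v$, and one bounds the number of such $w$ — possibly by again invoking niceness (many common neighbors would create a $C_{2k+1}$ swallowing some triangle) to get $O(1)$ or $O(n^{1/k})$ per edge, yielding the claimed $O(n^{1+1/k})$ overall.

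\smallskip

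\noindent
\textbf{The hard part} will be the rerouting argument that converts a forbidden short cycle in the auxiliary graph $G'$ into a genuine $C_{2k+1}$ whose vertex set contains a triangle of $\mathcal{T}$ — one must be careful that the cycle one produces is an actual cycle (no repeated vertices) and that its vertex set genuinely contains all three vertices of the triangle, since "align" is about vertex-set containment, not subgraph containment. The parity bookkeeping (matching $2k$-cycles in $G'$ to $(2k+1)$-cycles in $G$ via a triangle detour of net length $+1$) is the delicate point, and handling the case where the bad cycle passes through triangle vertices in an awkward way may require either a more clever choice of which edge to delete from each triangle, or a short case analysis. I expect the cleanest route mirrors exactly how Győri–Li handle the analogous step for $\ex(n,K_3,C_{2k+1})$, with niceness substituting for global $C_{2k+1}$-freeness in precisely the places where only local (vertex-set-containment) information is needed.
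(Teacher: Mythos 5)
Your overall strategy (reduce to a $C_{2k}$-free auxiliary graph, apply Bondy--Simonovits, and use niceness to reroute a $C_{2k}$ through a triangle apex into a $C_{2k+1}$ aligning with that triangle) is the right idea, but the two places you leave open are exactly where the proposal breaks. First, the rerouting: if $uv$ lies on a $C_{2k}$ of your auxiliary graph and $uvw$ is its nice triangle, the replacement of $uv$ by $u,w,v$ only produces a genuine $C_{2k+1}$ when $w$ is \emph{not} on the cycle, and when $w$ does lie on the cycle no choice of deleted edges rescues you: the chords $uw,vw$ only close cycles of length $a+2$ or $b+2$ with $a+b=2k-1$, never $2k+1$, so no contradiction with niceness is available. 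The paper's proof (following F\"uredi and \"Ozkahya) resolves precisely this by a random $3$-coloring: one keeps only edges lying in a rainbow nice triangle, works inside the red--blue bipartite subgraph $G_1$, and then the apex $w$ of the nice triangle over any edge of $G_1$ is forced to be green, hence automatically off any $C_{2k}$ of $G_1$; this is the ingredient your sketch is missing, not a routine case analysis.

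Second, your final accounting step is based on a false per-edge bound. An edge can lie in $\Theta(n)$ nice triangles: a book $K_2+\overline{K_t}$ contains no cycle longer than $C_4$, so for $k\ge 2$ all $t$ of its triangles are nice, and ``many common neighbors force a $C_{2k+1}$ swallowing some triangle'' is simply not true. Consequently, bounding the total count by (size of a maximal edge-disjoint family of nice triangles) times (nice triangles per edge) either fails outright or gives at best $O(n^{1+2/k})$, short of the target. The paper instead counts nice triangles at each vertex $v$ by an Erd\H os--Gallai argument in the neighborhood of $v$ (edges of $N(v)$ lying on a $P_{2k}$ inside $N(v)$ give triangles aligning with a $C_{2k+1}$), yielding that any graph has at most $(2k-2)|E|/3$ nice triangles; combined with the $O(n^{1+1/k})$ bound on the edges of the pruned, rainbow-supported graph, this closes the argument. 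Your write-up would need both of these replacements to become a proof.
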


Surprisingly, the case of even cycles is more complicated. One can easily show (and we will show in the proofs) that the number of triangles in a $C_\ell$-free graph $G$ is at most $(\ell-3)|E(G)|/3$, and similarly, the number of triangles not aligning with any $C_\ell$ in an arbitrary graph $G'$  is at most $(\ell-3)|E(G')|/3$. In the case $\ell$ is even, this immediately shows that $\ex(n,K_3,C_{2k})=O(\ex(n,C_{2k}))$. However, $G'$ can have more than $\ex(n,C_{2k})$ edges, and indeed the same upper bound does not hold.

\begin{thm}\label{parkor} \textbf{(i)} $n^{2-o(1)}=\ex_{\re}(n,K_3,C_4)=o(n^2)$.

\textbf{(ii)} For any $k\ge 3$ we have that
$\ex_{\re}(n,K_3,C_{2k})=O(n^{1+\frac{1}{k-1}})$ and $\ex_{\re}(n,K_3,C_{2k})=\Omega(\ex(n,\cC_{2k-2}))$.
\end{thm}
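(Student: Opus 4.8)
\medskip
\noindent\emph{Proof strategy.} For $\ell>3$ call a triangle $T$ of a graph $G$ \emph{nice} if $V(T)$ is not contained in the vertex set of any copy of $C_\ell$ in $G$; since $|V(C_\ell)|>3=|V(K_3)|$, this is exactly non-alignment in the sense of Definition~\ref{main}. Throughout let $H_1=H_1(G)$ be the spanning subgraph whose edge set is the union of the edge sets of the nice triangles. Every nice triangle is a triangle of $H_1$, so $\cN_{\re}(K_3,C_\ell,G)$ is at most the number of triangles of $H_1$; the plan is to show that $H_1$ is sparse even though $G$ need not be, and to match this with constructions.

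\noindent\emph{Part (i).} Here a triangle $\{a,b,c\}$ is nice (with respect to $C_4$) iff no vertex outside it is adjacent to two of $a,b,c$, i.e.\ iff each pair of its vertices has a unique common neighbour. Hence if $\{a,b,c\}$ is a nice triangle and $\{a,b,c'\}$ is any triangle of $H_1$, then $c'$ is a common neighbour of $a,b$, so $c'=c$: every edge of $H_1$ lies in exactly one triangle of $H_1$, all triangles of $H_1$ are nice, they are pairwise edge-disjoint, and $\cN_{\re}(K_3,C_4,G)=|E(H_1)|/3$. By the $(6,3)$-theorem of Ruzsa and Szemer\'edi, an $n$-vertex graph in which every edge lies in exactly one triangle has $o(n^2)$ edges, which gives the upper bound. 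For the matching lower bound, take the Ruzsa--Szemer\'edi graph obtained from a Behrend set: it has $n$ vertices, $n^{2-o(1)}$ edges and every edge in exactly one triangle, hence $n^{2-o(1)}$ nice triangles.

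\noindent\emph{Part (ii), upper bound.} Now $\ell=2k$, $k\ge 3$. The crucial claim is that $H_1$ has no copy of $C_{2k-2}$. Suppose $v_1v_2\cdots v_{2k-2}v_1$ is such a copy and let $w_i$ be the apex of a nice triangle on $v_iv_{i+1}$. Going from $v_1$ to $v_2$ the long way around the $(2k-2)$-cycle gives a path of length $2k-3$; replacing one interior edge $v_jv_{j+1}$ of it by the detour $v_j-w_j-v_{j+1}$ gives a path of length $2k-2$ from $v_1$ to $v_2$; closing it via $v_2-w_1-v_1$ produces a copy of $C_{2k}$ whose vertex set contains $\{v_1,v_2,w_1\}$, contradicting niceness of that triangle. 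Because $2k-2\ge4$ there are enough choices of the starting edge and of the detour edge to keep all vertices distinct even in the degenerate cases where some $w_i$ lies on the cycle, and handling those ``chord'' cases together with the parity of the surgery is the main technical work. Granting the claim, Bondy--Simonovits \cite{bosi} gives $|E(H_1)|=O(n^{1+1/(k-1)})$, and since $H_1$ is $C_{2k-2}$-free it has $O(|E(H_1)|)$ triangles (the elementary $C_4$-free bound when $k=3$, and otherwise the bound $(2k-5)|E(H_1)|/3$ noted before the theorem, applicable since $H_1$ has no $C_{2k-2}$ at all). Thus $\cN_{\re}(K_3,C_{2k},G)=O(n^{1+1/(k-1)})$.

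\noindent\emph{Part (ii), lower bound, and the obstacle.} Adapt the construction of Gy\H ori and Li \cite{LiGy}: let $B$ be a bipartite $\cC_{2k-2}$-free graph with parts $X,Y$ and $\Theta(\ex(N,\cC_{2k-2}))$ edges on $N=|X|+|Y|$ vertices (take a maximum-cut bipartite subgraph of an extremal $\cC_{2k-2}$-free graph), and let $G$ be obtained by adding for each $x\in X$ a new vertex $x'$ adjacent to $x$ and to all of $N_B(x)$. Then $\{x,x',y\}$ is a triangle for every edge $xy\in B$, and these are the only triangles of $G$; identifying each $z'$ with $z$ sends a hypothetical $C_{2k}$ through such a triple to a short closed walk in $B$, which must contain a cycle of length between $3$ and $2k-2$ or an odd closed walk, contradicting the choice of $B$. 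Hence all $|E(B)|$ triangles are nice; since $|V(G)|\le 2N$ and $\ex(\cdot,\cC_{2k-2})$ is polynomially bounded, padding with isolated vertices gives $\cN_{\re}(K_3,C_{2k},G)=\Omega(\ex(n,\cC_{2k-2}))$. The one place that requires genuine care is the $C_{2k-2}$-freeness of $H_1$ in the upper bound, where the cycle surgery must respect parity and deal with apices lying on the cycle; everything else above is either a citation (Ruzsa--Szemer\'edi, Bondy--Simonovits, Gy\H ori--Li) or a routine verification.
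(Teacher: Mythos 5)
Your part (i) is correct and is in substance the paper's argument: observing that every edge of a nice (for $C_4$) triangle lies in exactly one triangle reduces the problem to graphs in which every edge is in exactly one triangle, and quoting Ruzsa--Szemer\'edi for both directions is equivalent to the paper's reduction to $B_2$-free graphs together with the Alon--Shikhelman bound $n^{2-o(1)}=\ex(n,K_3,B_2)=o(n^2)$. Your lower bound in (ii) is also the paper's construction (doubling one side of an extremal bipartite $\cC_{2k-2}$-free graph); your verification is somewhat loose --- the projected closed walk can also be a genuine $C_{2k}$ of $B$ (not forbidden in $B$; the point there is that such a cycle cannot visit both $x$ and $x'$, so it misses the triangle) or an even closed walk with acyclic support, and these cases need the kind of walk analysis the paper carries out --- but this part is fixable.

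The upper bound in (ii), however, has a genuine gap: your ``crucial claim'' that the union $H_1$ of the edges of nice triangles contains no $C_{2k-2}$ is false. Take $G$ to be the wheel with rim $v_1v_2\dots v_{2k-2}$ and hub $w$ (plus isolated vertices). Since $G$ has only $2k-1$ non-isolated vertices it contains no $C_{2k}$ at all, so every triangle $v_iv_{i+1}w$ is nice; hence every rim edge lies in a nice triangle and the rim is a $C_{2k-2}$ inside $H_1$. Your surgery inherently requires two distinct apices $w_j\neq w_1$, and the obstruction is precisely the case in which all apices coincide (and lie off the cycle) --- not the ``chord'' cases you flag --- so no choice of starting edge or detour edge repairs it: the statement itself fails. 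This is exactly the configuration the paper must handle. There, a random 3-colouring first guarantees that the apex of a nice triangle on a red--blue cycle is green, hence off the cycle; then one shows that along any $C_{2k-2}$ of the red--blue graph all apices are equal (the cycle is the rim of a wheel), and that two such cycles with different vertex sets share at most one vertex, so the red--blue graph contains only $O(n^2)$ copies of $C_{2k-2}$. The edge bound $O(n^{1+\frac{1}{k-1}})$ is then obtained not from Bondy--Simonovits (which needs $C_{2k-2}$-freeness you do not have) but from the supersaturation theorem for even cycles of Erd\H os--Simonovits (proved by Jiang--Yepremyan): a graph with $Cn^{1+\frac{1}{k-1}}$ edges already contains far more than $n^2$ copies of $C_{2k-2}$. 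Without replacing your false claim by a statement of this type (structured, hence few, copies of $C_{2k-2}$, combined with supersaturation), the bound $\ex_{\re}(n,K_3,C_{2k})=O(n^{1+\frac{1}{k-1}})$ does not follow from your argument.
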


The rest of the paper is organized as follows. Section 2 contains the proofs of Propositions \ref{utak} and \ref{korok} and of Theorems \ref{zyk2}, \ref{getu} and \ref{parkor}. We finish the paper with some concluding remarks in Section 3.

\section{Proofs}

Let us prove Proposition \ref{utak}, that we restate here for convenience.

\begin{proposition*}
Let $3<k<\ell$ and $\ell-k$ odd. Then $\ex_{\re}(n,P_k,P_\ell)=\Theta(n^{k-2})$.
\end{proposition*}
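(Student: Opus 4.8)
The plan is to show that any nice copy of $P_k$ (one that does not align with any $P_\ell$) is very constrained, so that there are only $O(n^{k-2})$ of them. Let $v_1 v_2 \cdots v_k$ be a nice copy of $P_k$ in $G$. Since $\ell-k$ is odd and $\ell > k$, we have $\ell \ge k+1$; I would first argue that the two endpoints $v_1$ and $v_k$ together can have only boundedly many other neighbours, or more precisely that one cannot "extend" the path much. The key observation is this: if we could attach a path of new vertices to an endpoint (or otherwise grow the vertex set to a copy of $P_\ell$ containing our $P_k$'s vertices), the copy would not be nice. Concretely, a copy of $P_\ell$ whose vertex set contains $\{v_1,\dots,v_k\}$ would make the $P_k$ align upward; a copy of $P_\ell$ whose vertex set is contained in $\{v_1,\dots,v_k\}$ is impossible since $\ell > k$. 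So niceness is purely an "upward" condition: no $P_\ell$ in $G$ has vertex set $\supseteq \{v_1,\dots,v_k\}$. I would then show that this forces, say, the neighbourhoods of $v_1$ and $v_k$ outside the path to be essentially contained in $\{v_2,\dots,v_{k-1}\}$ up to a bounded number of exceptions — because otherwise one could route a longer path through a fresh vertex adjacent to an endpoint and then greedily extend (using that $G$ either has a long path avoiding our set, giving a $P_\ell \supseteq$ our set, or is globally path-bounded). This is where the parity hypothesis $\ell - k$ odd should enter: it guarantees that the extension can be made to land on exactly $\ell$ vertices rather than being blocked by parity (this contrasts with the even case mentioned as genuinely different). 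Having bottled up the endpoints, I would count: choose the (bounded) "core" freely and the internal vertices — roughly $k-2$ free choices remain — yielding $O(n^{k-2})$.

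**Lower bound.** For the matching construction I would take a graph that has many copies of $P_k$ but is globally "short" in the sense that it contains no $P_\ell$ at all as a subgraph, so that trivially every copy of $P_k$ is nice. A natural candidate: take a clique (or a complete bipartite graph, or a blow-up of a short path) on a bounded number of vertices to host the "long" part of the path, and attach $\Theta(n)$ pendant structures to get flexibility. More precisely, I expect the right example is something like $K_{k-2}$ joined appropriately to an independent set of size $n-(k-2)$, or a "book"-like graph: this has $\Theta(n^{2})$-ish copies of short paths but one must be careful that $P_\ell$ does not appear. The cleanest route is probably: take a graph $G_0$ on a constant number of vertices with no $P_\ell$, then for the exponent $n^{k-2}$ one wants $k-2$ "independent" linear-sized directions. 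I would build $G$ as follows: a path-like gadget of constant length providing the two endpoints' freedom is wrong; instead, since we want exponent $k-2$, take $k-2$ vertices $u_1,\dots,u_{k-2}$ and blow up or attach $\Theta(n)$ leaves to each in a way that any path using leaves has bounded length — e.g., make $u_1\cdots u_{k-2}$ a path, give each $u_i$ its own set of $\Theta(n/(k-2))$ private neighbours of degree $1$. Then copies of $P_k$ of the form (leaf)$-u_1-u_2-\cdots-u_{k-2}-$(leaf) number $\Theta(n^2)$, too many — so I must instead restrict so only $k-2$ free choices survive, e.g. fix the internal path and let only interior-incident leaves vary, or use degree constraints to cap path length at $k-1$ or $k$ plus a bounded amount while still permitting $\Theta(n^{k-2})$ distinct $P_k$'s. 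I would calibrate the gadget so that the longest path has fewer than $\ell$ vertices, which is possible precisely because we may keep all but $k-2$ "layers" of constant size.

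**Main obstacle.** The hard part is the upper bound, specifically proving that a nice $P_k$ cannot be extended to a $P_\ell$ on a superset of its vertices, and extracting from this a genuine structural restriction rather than a vacuous one. One must handle the case analysis: either $G$ contains $P_\ell$ somewhere disjoint-ish from our copy (then some extension argument produces a $P_\ell$ through our $k$ vertices, contradiction — here is where I need a lemma that a long path plus a $P_k$ sharing few vertices can be spliced into a $P_\ell$, using the parity of $\ell-k$ to hit the length exactly), or $G$ has bounded path length, in which case a cheap global count of $P_k$'s suffices. Making the splicing lemma precise — controlling how the short path and long path share vertices and ensuring the glued path has exactly $\ell$ vertices — is the technical crux, and the odd parity of $\ell-k$ is exactly the ingredient that makes the length bookkeeping work out (an endpoint-to-endpoint detour changes the count by the right amount modulo $2$).
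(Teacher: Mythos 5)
Your lower-bound plan is to build a graph that contains no $P_\ell$ at all, so that every copy of $P_k$ is trivially nice. This cannot give $\Omega(n^{k-2})$ once $k\ge 6$: in a $P_\ell$-free graph the number of copies of $P_k$ is $O(n^{\lceil k/2\rceil})$ (a $P_k$ contains a matching of size $\lfloor k/2\rfloor$, and Erd\H os--Gallai bounds the number of edges by $O(n)$), and $\lceil k/2\rceil<k-2$ for $k\ge 6$. Put differently, to get $\Theta(n^{k-2})$ copies of $P_k$ you need $k-2$ of the $k$ positions to admit linearly many choices, so two consecutive positions are both of linear size, and the complete bipartite graph between them already contains paths on far more than $\ell$ vertices. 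Hence the extremal construction must contain copies of $P_\ell$, and the real work is to show the counted copies of $P_k$ do not align with them. The paper's construction blows up every vertex of a $P_k$ except the second and the $(k-1)$st; the counted copies then have degree-one endpoints, so any $P_\ell$ whose vertex set contains such a copy must have those endpoints as its own endpoints and must leave and re-enter the path, and a parity argument in this bipartite blow-up shows the number of outside vertices used is even, contradicting $\ell-k$ odd. That is where the parity hypothesis actually enters; your proposal locates it in the upper bound instead, but the upper bound holds for every $\ell>k$ irrespective of parity (as the concluding remarks of the paper confirm for even $\ell-k$).

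The upper-bound plan also has a genuine gap. The structural claim that the endpoints of a nice $P_k$ have essentially bounded neighbourhood outside the path is false in general: for $\ell\ge k+3$ one can attach linearly many pendant vertices to an endpoint without creating any $P_\ell$ through the path's vertex set. And even where such a claim holds it would not yield your count, because given the $k-2$ internal vertices the number of admissible endpoints need not be $O(1)$ (in the extremal construction it is $\Theta(n)$ at each end). The correct bookkeeping lets the $k-4$ vertices $v_3,\dots,v_{k-2}$ be free, and shows that the two end edges $v_1v_2$ and $v_{k-1}v_k$ each range over at most $O(n)$ edges. Your mechanism for a contradiction --- splicing the nice $P_k$ with a copy of $P_\ell$ found elsewhere --- cannot work: a $P_\ell$ that does not contain the $P_k$'s vertex set does not align with it, may share no vertices with it, and may even lie in a different component, so no splicing lemma produces the required $P_\ell$; and the complementary case ``$G$ is globally path-bounded'' is not the relevant alternative, since $G$ may contain many copies of $P_\ell$ far from the copy under consideration. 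The paper's argument is local: if, for a fixed choice of the middle vertices, more than $\ell n$ edges could serve as $v_1v_2$, then these edges contain a subgraph of minimum degree at least $\ell$; choosing $v_1v_2$ inside it, completing to a nice $P_k$, and greedily extending from $v_1$ within that subgraph yields a $P_\ell$ containing the $P_k$, a contradiction. Without this extraction of a high-minimum-degree subgraph (or an equivalent device), the greedy extension you invoke has no degree guarantees to run on.
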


\begin{proof}
 The lower bound is given by the following construction: we take a copy of $P_k$ with vertices $v_1,\dots,v_k$ in this order, and blow up each vertex except for $v_2$ and $v_{k-1}$ to linear size. More precisely, for $1\le i\le k$, $2\neq i\neq k-1$ we add $v_i^j$ for $j\le n/k$ and connect each $v_i^j$ to $v_{i-1}$, to each $v_{i-1}^m$, to $v_{i+1}$ and to each $v_{i+1}^m$. Then we obtain $\Omega(n^{k-2})$ copies of $P_k$ with vertices $v_1^{m_1},v_2,v_3^{m_3}\dots, v_{k-2}^{m_{k-2}},v_{k-1},v_k^{m_k}$ in this order. As the first and last vertices of these paths have degree 1, these paths cannot be extended to a longer path. 
 
 The only way such a path $P$ can align with a copy $P'$ of $P_\ell$ is if the endpoints of $P'$ are $v_1^{m_1}$ and $v_k^{m_1}$ and $P'$ leaves $P$ and returns to $P$ at least once. Then $P'$ visits the vertices of $P$ in an order $u_1,u_2,\dots,u_{k-1},u_k$, where $u_1=v_1^{m_1}$
 and $u_k=v_k^{m_k}$. Let $f(i)$ denote the number of vertices on $P$ between $u_i$ and $u_{i+1}$, and $f'(i)$ denote the number of vertices on $P'$ between $u_i$ and $u_{i+1}$ (including $u_i$ and $u_{i+1}$). Observe that $f(i)$ and $f'(i)$ have the same parity, because the construction is a bipartite graph. Observe that $\sum_{i=1}^{k-1} f(i)$ is even, as we count every vertex but $u_1$ and $u_k$ an even number of times: if we jump over it, we have to jump back. This implies that $P'$ has an even number of vertices not in $P$, a contradiction to our assumption that $\ell-k$ is odd.


 For the upper bound, let $G$ be an $n$-vertex graph, and let us pick a copy of $P_k$  with vertices $v_1,\dots,v_k$ in this order such that this copy does not align with any $P_\ell$, the following way.
 First we pick the vertices different from $v_1,v_2,v_{k-1},v_k$, and then the edges $v_1v_2$ and $v_{k-1}v_k$. There are $O(n^{k-4})$ ways to pick the vertices, so if there are at most $\ell n$ ways to pick both the edges $v_1v_2$ and $v_{k-1}v_k$, then we are done.
 
 Assume without loss of generality that there are more than $\ell n$ choices for $v_1v_2$, let $E_1$ be the set of these edges and $G_1$ be the graph on $V(G)$ and edge set $E_1$. Then we remove each vertex with degree less than $\ell$ in $G_1$ to obtain $G_2$. Then we remove each vertex with degree less than $\ell$ in $G_2$, and so on. At the end we obtain a graph $G'$ with minimum degree at least $\ell$. Then $G'$ is non-empty, as we deleted less than $\ell n$ edges from $E_1$. Thus there are at least $\ell$ vertices in $G'$.
 
 Let us now pick $v_1v_2$ arbitrarily from $E(G')$. Then there is an edge that can be picked as $v_{k-1}v_k$ so that the resulting $P_k$ does not align with any $P_\ell$, we pick such an edge. With $v_3,\dots,v_{k-2}$, they form a $P_k$ that does not align with any $P_\ell$. We will extend this $P_k$ to a $P_\ell$, obtaining a contradiction. Observe that $v_1$ has at least $\ell$ neighbors in $G'$, we pick one not on the path so far. We can extend the path to a $P_\ell$ the same way, by picking a neighbor of the vertex previously picked: the penultimate vertex still has at least one neighbor in $G'$ that is not on the path so far, thus we can finish the path to obtain a $P_\ell$, finishing the proof with a contradiction.
\end{proof}


Let us continue with the proof of Theorem \ref{zyk2}, that we restate here for convenience.

\begin{theorem*}
Let $H_1,\dots,H_\ell,H'_1,\dots,H'_m$ be complete multipartite graphs, $\alpha_i\ge 0$ 
and $\beta_j\ge 0$.
Then among $n$-vertex graphs $G$, $x(G):=\sum_{i=1}^\ell \alpha_i\cN_{\re}(H_i,K_{k_i+1},G)+\sum_{j=1}^m\beta_j \cN(H'_j,G)$ is maximized by a complete multipartite graph $T$. 
\end{theorem*}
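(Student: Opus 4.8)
The plan is to use Zykov symmetrization, the standard tool mentioned in the excerpt, but to check that each symmetrization step does not decrease the weighted count $x(G)$. Recall that Zykov symmetrization works as follows: given two nonadjacent vertices $u$ and $v$ in $G$, we may replace the neighbourhood of $u$ by a copy of the neighbourhood of $v$ (or vice versa), obtaining a graph $G'$ in which $u$ and $v$ are nonadjacent twins. Iterating, one shows that being ``twins or adjacent'' becomes an equivalence relation, and a graph in which it is an equivalence relation is complete multipartite. So the real content is the inequality: for a suitable choice among the two options (symmetrizing $u$ to $v$ or $v$ to $u$), we have $x(G')\ge x(G)$.

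First I would handle the plain terms $\cN(H'_j,G)$. These are exactly the terms handled by Schelp and Thomason / Bollob\'as, and the key observation is that for a fixed complete multipartite $H'_j$, the number of copies of $H'_j$ through $u$ plus the number through $v$ (counting those through both twice) behaves like a function that is, after symmetrization to whichever of $u,v$ has the larger ``local count'', at least as large as before; more precisely one partitions copies of $H'_j$ by whether they avoid $\{u,v\}$ (unchanged), contain exactly one of $u,v$ (these get replaced by copies through the chosen vertex, and there are at least as many), or contain both (impossible after symmetrization since $u,v$ become nonadjacent twins, but in a complete multipartite graph a copy of a complete multipartite graph never uses two vertices from the same part anyway, and $u,v$ nonadjacent forces them into the same part — so such copies do not exist once we are close to the fixed point, and in general one shows this term only helps). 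This is routine and I would cite \cite{scth, boll} for the structure of the argument.

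The main new work, and the step I expect to be the main obstacle, is controlling the terms $\alpha_i\cN_{\re}(H_i,K_{k_i+1},G)$ under symmetrization. Two things must be checked simultaneously: (a) symmetrization does not create new copies of $K_{k_i+1}$ (this is the classical fact, since $u,v$ become nonadjacent twins they cannot both lie in a clique, so any new clique would have been a clique already), and (b) a copy of $H_i$ that was ``nice'' (does not align with any $K_{k_i+1}$) either stays nice or is replaced by at least as many nice copies. Part (b) is delicate because aligning with $K_{k_i+1}$ involves both sub- and super-configurations: a copy of $H_i$ aligns with a $K_{k_i+1}$ if it contains one or is contained in one (on its vertex set). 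When we symmetrize $u$ onto $v$, a nice copy $C$ of $H_i$ not meeting $\{u,v\}$ stays a copy, but could it stop being nice because a new $K_{k_i+1}$ appears inside $V(C)$ or containing $V(C)$? By (a) no new clique appears, so a clique inside $V(C)$ or on a superset of $V(C)$ existed before — wait, a clique on a \emph{superset} of $V(C)$ uses vertices outside $C$, possibly $u$ or $v$ whose neighbourhoods changed; here one must argue that if $V(C)\cup S$ now induces a clique with $u\in S$, then $V(C)\cup S$ with $u$ replaced by $v$ was a clique before, so $C$ was already not nice — good, so niceness of untouched copies is preserved. For copies meeting exactly one of $u,v$, one sets up the usual injection from nice copies through the ``losing'' vertex to nice copies through the ``winning'' vertex and checks it maps nice to nice, again using that $u$ and $v$ have identical neighbourhoods after the step. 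The case of a copy meeting both $u$ and $v$: since $u,v$ are nonadjacent, such a copy of $H_i$ has $u,v$ in the same part, and one pairs it with copies entirely on one side.

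I would therefore structure the proof as: (1) reduce to showing each elementary symmetrization step can be performed without decreasing $x$; (2) fix the pair $u,v$, define $x_u$ and $x_v$ as the contributions of configurations through $u$ (resp.\ $v$) but in a way that double-counts those through both, exactly as in \cite{scth}; (3) show $x(G_{u\to v}) \ge x(G) + (x_v - x_u)$ and symmetrically, so choosing the direction that increases $x_\cdot$ never decreases $x$; (4) verify the clique-creation and niceness-preservation claims above for the $\cN_{\re}$ terms and the standard claim for the $\cN$ terms; (5) conclude that iterating reaches a graph where ``twin-or-adjacent'' is an equivalence relation, i.e.\ a complete multipartite graph $T$, with $x(T)\ge x(G)$ for the original $G$, and since this holds for a maximizer $G$ we are done (a compactness/finiteness remark handles termination — each step weakly increases $x$ and strictly increases, say, the number of edges or the number of twin pairs, so the process terminates, as in the classical argument). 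The genuinely new point to get right is step (4) for $\cN_{\re}$, specifically that alignment with $K_{k_i+1}$ — a condition referring to both smaller and larger vertex sets — is preserved in the right direction under symmetrization, and I would spell that out carefully rather than appeal to the classical statement.
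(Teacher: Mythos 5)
Your overall route is the same as the paper's: Zykov symmetrization, where the genuinely new point is that niceness is preserved because any copy of $K_{k_i+1}$ that is new in $G'$ must contain $u$ and avoid $v$, so swapping $u$ for $v$ yields a clique of $G$ aligning with the copy in question; your step (4), both for copies untouched by the step and for copies meeting exactly one of $u,v$, is exactly the paper's key claim. However, your treatment of copies containing \emph{both} $u$ and $v$ is wrong. You assert that a copy of a complete multipartite graph never uses two vertices from the same part, so such copies of $H'_j$ essentially do not exist; this is false, since copies are not required to be induced, and a copy of $H'_j$ (or a nice copy of $H_i$) may well place the nonadjacent pair $u,v$ inside one of its partite classes. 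These copies must be shown to survive the step (they do: after symmetrization the common neighbourhood of $u$ and $v$ only grows, and niceness is preserved by the same swap argument), and this is not optional bookkeeping: writing $d^*(u),d^*(v),d^*(u,v)$ for the weighted counts through $u$, through $v$, and through both, the loss from deleting the edges at $u$ is up to $d^*(u)$, while the replacement copies through $u$ obtained from copies through $v$ only account for $d^*(v)-d^*(u,v)$; so in the tie case $d^*(u)=d^*(v)$ the inequality $x(G')\ge x(G)$ fails unless the surviving copies through both $u$ and $v$ contribute their $d^*(u,v)$. Your phrase ``one pairs it with copies entirely on one side'' does not establish this.

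The second gap is termination. Neither of your proposed potentials works: symmetrizing the smaller-$d^*$ vertex onto the larger one can decrease the number of edges (since $d^*$ is a weighted copy count, not the degree), and twin pairs involving $u$ can be destroyed, so neither quantity is strictly increasing. Since steps with $d^*(u)=d^*(v)$ give no strict increase of $x$, the iteration can a priori cycle; the paper deals with this by fixing an ordering of the vertices, breaking ties by symmetrizing towards the vertex whose twin class contains the smaller index, and proving by an inductive counting argument that only finitely many tie steps occur between strict increases. If you prefer to avoid that, you must instead argue directly about a maximizer, but then the equality case $d^*(u)=d^*(v)$ still has to be addressed rather than waved at ``as in the classical argument.''
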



\begin{proof} Let $G$ be a graph on $n$ vertices, and for a vertex $v$ of $G$, let $d^*(v)$ denote the sum of the weights of copies of $H_i$ and $H'_j$ that are counted towards $x(G)$ and contain $v$. More precisely, let $d_i(v)$ denote the number of copies of $H_i$ in $G$ that do not align with any $K_{k_i+1}$ and contain $v$, let $d'_j(v)$ denote the number of copies of $H_j'$ in $G$ that contain $v$, and let $d^*(v)=\sum_{i=1}^\ell \alpha_i d_i(v)+\sum_{j=1}^m \beta_j d'_j(v)$. Similarly, let $d^*(u,v)$ denote the sum of the weights of copies of $H_i$ and $H'_j$ that are counted towards $x(G)$ and contain both $u$ and $v$.

We will apply Zykov's symmetrization. Consider two non-adjacent vertices $u$ and $v$ and assume without loss of generality that $d^*(u)\le d^*(v)$. Then we ``symmetrize'' $u$ to $v$: we remove all the edges incident to $u$, and for each edge $vw$, we add the edge $uw$ to obtain another graph $G'$.

\begin{clm} We have $x(G')\ge x(G)$. Moreover, if $d^*(u)<d^*(v)$, then $x(G')> x(G)$.
\end{clm}

\begin{proof}

Let $G_0$ be the graph we obtain by deleting the edges incident to $u$. We have $x(G_0)\ge x(G)-d^*(u)$. Indeed, $d^*(u)$ accounts for removing the nice copies of $H_i$ containing $u$ and for removing the copies of $H_j'$ containing $u$. It is possible that there are some other copies of $H_i$ in $G$ that align with some cliques $K_{k_i+1}$, but those cliques all contain $u$; in that case they become nice after deleting the edges incident to $u$, thus $x(G)$ may decrease by less than $d^*(u)$.

We will show that $x(G')\ge x(G_0)+d^*(u)$. First we will show that $d^*(u,v)$ does not decrease when we symmetrize $u$ to $v$.

Consider a nice copy $H_i^*$ of $H_i$ in $G$ that contains both $u$ and $v$. Then $u$ and $v$ belong to the same partite set of $H_i^*$. Thus $H_i^*$ is in $G'$, as the common neighbors of $u$ and $v$ in $G$ are also common neighbors of $u$ and $v$ in $G'$. 
If $H_i^*$ aligns with a copy $K_{k_i+1}^*$ of $K_{k_i+1}$ in $G'$, then $K_{k_i+1}^*$ cannot be in $G$, thus has to contain $u$ (the only vertex incident to new edges), but then $K_{k_i+1}^*$ cannot contain $v$. Therefore, we can replace $u$ with $v$ in $K_{k_i+1}^*$ we obtain a copy of $K_{k_i+1}$ in $G$ that aligns with $H_i^*$, a contradiction. This shows that the number of copies of $H_i$ aligning with no $K_{k_i+1}$ and containing both $u$ and $v$ does not decrease. 

Consider now a copy of $H_j'$ in $G$ that contains both $u$ and $v$. Then this copy is also in $G'$, thus the number of copies of $H'_j$ containing both $u$ and $v$ does not decrease. 

This shows that when comparing $x(G')$ to $x(G_0)$, we have an increase by $d^*(u,v)$ when counting only those copies of $H_i$ and $H_j$ that contain both $u$ and $v$. We will show that there is an increase of $d^*(v)-d^*(u,v)$ when counting the copies of $H_i$ and $H_j'$ that contain $u$ but not $v$. 

Let us consider a nice copy $H_i^*$ in $G$ that contains $v$ but not $u$. Let $H_i^{**}$ be obtained from $H_i^*$ by replacing $v$ with $u$, then $H_i^{**}$ is in $G'$. If $H_i^{**}$ aligns with a copy $K_{k_i+1}^{**}$ of $K_{k_i+1}$ in $G'$, then $K_{k_i+1}^{**}$ does not contain $v$. Indeed, if $|V(H_i)|\ge k_i$, then $K_{k_i+1}^{**}$ is a subgraph of $H_i^{**}$, and if $|V(H_i)|< k_i$, then $H_i^{**}$ is a subgraph of $K_{k_i+1}^{**}$, thus $K_{k_i+1}^{**}$ contains $u$, hence it cannot contain $v$. Let $K_{k_i+1}^{*}$ be obtained from $K_{k_i+1}^{**}$ by replacing $u$ with $v$. Then $K_{k_i+1}^{*}$ is in $G$ and aligns with $H_i^*$, a contradiction. This means that for each copy of $H_i$ contributing to $d^*(v)-d^*(u,v)$, we have a copy of $H_i$ in $G'$ containing $u$ but not $v$ and contributing to $x(G')$. Similarly, if $H'_j$ contains $v$ but not $u$, then replacing $v$ with $u$ creates a copy of $H'_j$ in $G'$. This shows that the total weight of nice copies of $H_i$ and copies $H_j'$ that contain $u$ and not $v$ in $G'$ is at least $d^*(v)-d^*(u,v)$.

We have shown that the total weight of nice copies of $H_i$ and copies of $H'_j$ containing $u$ does not decrease, moreover, it increases if $d^*(v)>d^*(u)$. We also have to deal with copies of $H_i$ not containing $u$. They obviously are also in $G'$, but 
they could stop being nice, which would decrease $x(G)$. However, if such a copy $H_i^*$ aligns with a copy $K_{k_i+1}^{*}$ of $K_{k_i+1}$ in $G'$, then $K_{k_i+1}^{*}$ cannot be in $G$, thus has to contain $u$. This implies that $K_{k_i+1}^{*}$ does not contain $v$. Let $K_{k_i+1}^{**}$ be obtained from $K_{k_i+1}^{*}$ by replacing $u$ with $v$. then $K_{k_i+1}^{**}$ is in $G$ and aligns with $H_i^*$ in $G$, a contradiction.
\end{proof}

We choose an arbitrary ordering $v_1,\dots,v_n$ of the vertices of $G$.
We will apply such symmetrization steps repeatedly, as long as there are two non-adjacent vertices $v_i$ and $v_j$ such that their neighborhood is not exactly the same, i.e. the symmetrization changes the graph. If $d^*(v_i)<d^*(v_j)$, then we symmetrize $v_i$ to $v_j$; if $d^*(v_i)>d^*(v_j)$, then we symmetrize $v_j$ to $v_i$. If $d^*(v_i)=d^*(v_j)$, then we check the smallest index $i'$ of vertices with the exact same neighborhood as $v_i$, and the smallest index $j'$ of vertices with the exact same neighborhood as $v_j$.
We
symmetrize $v_i$ to $v_j$ if $i'>j'$ and we symmetrize $v_j$ to $v_i$ if $i'<j'$.

\begin{clm}
There are finitely many symmetrization steps.
\end{clm}

\begin{proof}
 Steps where $d^*(v)\neq d^*(u)$ can happen at most $(\ell+m)n!2^n$ times, as the number of copies of some $H_i$ or $H'_j$ increases, but that number is bounded by $n!2^n$ for every $i$ and $j$. 
 
Between two such steps, $v_i$ is symmetrized to some other vertex at most $n2^{i-2}$ times. This can be shown by induction on $i$; for $i=1$, observe that $v_1$ is not symmetrized. For $i=2$, $v_2$ is only symmetrized to a vertex that has the same neighborhood as $v_1$, thus from that point $v_2$ has the same neighborhood as $v_1$. Indeed, if some other vertex $v_i$ is symmetrized to $v_j$, then either $v_j$ is connected to both $v_1$ and $v_2$ or none of them, thus this will hold for $v_i$ after the symmetrization.

If the statement holds for $i$, observe that $v_{i+1}$ can only be symmetrized only to some $v_j$ with $j\le i$ (or a vertex with the same neigborhood). However, $v_{i+1}$ may not have the same neighborhood as $v_j$ after a step where $v_j$ is symmetrized to another vertex. In this case we may symmetrize $v_{i+1}$ again to $v_j$ (or a vertex with the exact same neighborhood as $v_j$ at that point). This means that $v_{i+1}$ is symmetrized to $v_j$ (or a vertex with the exact same neighborhood as $v_j$ at that point) again only after $v_j$ was symmetrized to another vertex, thus at most $1+n2^{j-2}$ times by induction. Therefore, altogether $v_{i+1}$ is symmetrized to other vertices at most $\sum_{j=1}^{i}1+n2^{j-2}\le n2^{i-1}$ times, finishing the proof.
\end{proof}

After finitely many steps, we arrive to a graph $G^*$ that cannot be changed by symmetrization, thus non-adjacent vertices have the same neighborhood, which means being non-adjacent is an equivalence relation, i.e. $G^*$ is complete multipartite. 
\end{proof}

Let us continue with the proof of Theorem \ref{getu}, that we restate here for convenience.
\begin{theorem*}
Let $F$ be a 3-chromatic graph with a color-critical edge and $a,b\ge 2$. Then for large enough $n$ we have that
$\ex(n,K_{a,b},F)=\ex(n,K_{a,b},K_3)=\cN(K_{a,b},K_{t,n-t})$ for some $t$.
\end{theorem*}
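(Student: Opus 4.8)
Throughout, let $t=t(n)$ maximize $\cN(K_{a,b},K_{t,n-t})$. The lower bound is immediate: $K_{t,n-t}$ is bipartite, hence $F$-free since $\chi(F)=3$, so $\ex(n,K_{a,b},F)\ge\ex(n,K_{a,b},K_3)$, while $\ex(n,K_{a,b},K_3)=\cN(K_{a,b},K_{t,n-t})$ by the theorem of Gy\H ori, Pach and Simonovits, and in fact $\ex_{\re}(n,K_{a,b},K_3)=\ex(n,K_{a,b},K_3)=\cN(K_{a,b},K_{t,n-t})$ by Theorem~\ref{zyk} together with Theorem~\ref{brsi}. For the upper bound, fix an $n$-vertex $F$-free graph $G$. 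Since $a,b\ge 2$, every non-induced copy of $K_{a,b}$ in $G$ contains a triangle (an extra edge inside one part, together with the at least two vertices of the other part, spans one), and conversely an induced copy of $K_{a,b}$, having no triangle and more than three vertices, aligns with no $K_3$. Thus the copies of $K_{a,b}$ in $G$ are of two kinds: the induced ones, of which there are at most $\ex_{\re}(n,K_{a,b},K_3)=\cN(K_{a,b},K_{t,n-t})$ by Theorem~\ref{zyk}, and the non-induced ones, each containing a triangle and hence numbering at most $\binom{n}{a+b-3}\ex(n,K_3,F)$. As $\chi(F)=3$, Theorem~\ref{gepa} gives $\ex(n,K_3,F)\le\ex(n,K_3,K_3)+o(n^3)=o(n^3)$, so there are only $o(n^{a+b})$ non-induced copies. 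This already shows $\ex(n,K_{a,b},F)=\cN(K_{a,b},K_{t,n-t})+o(n^{a+b})$ and, for any $K_{a,b}$-extremal $F$-free graph $G$, that $i(K_{a,b},G)\ge i(K_{a,b},n)-o(n^{a+b})$.

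To obtain the exact value I would combine this with two structural inputs. First, since $F$ is $3$-chromatic with a color-critical edge $e=pq$, in every proper $2$-coloring of the bipartite graph $F-e$ the endpoints $p,q$ get the same color (otherwise the coloring would be proper for $F$); placing the two color classes of $F-e$ into the two sides of $K_{m,m}$ with $m:=|V(F)|$, and realizing $e$ by one edge inside the side receiving $p$ and $q$, we see that $K_{m,m}$ plus a single edge inside a part already contains $F$. Hence an $F$-free graph contains no such configuration. Second, a $K_{a,b}$-extremal $F$-free $G$ is near-extremal for induced $K_{a,b}$ by the first paragraph, so a stability version of Theorem~\ref{brsi} (which follows from its proof by standard arguments) provides a partition $V(G)=X\cup Y$ with $|X|,|Y|=\Theta(n)$ (else $i(K_{a,b},G)=o(n^{a+b})$) such that all but $o(n^2)$ pairs inside $X$, and all but $o(n^2)$ pairs inside $Y$, are non-edges, while all but $o(n^2)$ cross pairs are edges.

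The remaining goal is to show that $G$ is a spanning subgraph of a complete bipartite graph. Call a vertex \emph{typical} if it is incident to only $o(n)$ of the $o(n^2)$ ``wrong'' pairs; all but $o(n)$ vertices are typical, and a typical vertex of $X$ has $(1-o(1))|Y|$ neighbors in $Y$ and $o(n)$ neighbors in $X$. If two typical vertices $u,v\in X$ were adjacent, then $u$ and $v$ would have $\Omega(n)$ common neighbors in $Y$, and the (nearly complete) bipartite graph between these common neighbors and the typical vertices of $X\setminus\{u,v\}$ contains $K_{m-2,m}$; adjoining $u,v$ and the edge $uv$ gives $K_{m,m}$ plus an edge inside a part, contradicting $F$-freeness. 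After re-partitioning so as to minimize the number of edges inside the parts — which forces every vertex's internal degree to be at most its external degree — we conclude that $G$ restricted to the typical vertices is a subgraph of a complete bipartite graph. The crux is to eliminate the $o(n)$ atypical vertices: such a vertex either carries $\Omega(n)$ internal edges, which is impossible since one of its internal neighbors is typical and the $K_{m,m}$-plus-edge argument applies, or is missing $\Omega(n)$ cross-edges, in which case extremality is used: adding a missing cross-edge strictly increases $\cN(K_{a,b},\cdot)$, and one checks — this is precisely where the color-critical hypothesis enters — that doing so creates no copy of $F$ through the new edge and the few surviving internal edges. Once $G$ has no internal edge it lies inside some $K_{s,n-s}$, and since $K_{s,n-s}$ is $F$-free we get $\ex(n,K_{a,b},F)=\cN(K_{a,b},G)\le\cN(K_{a,b},K_{s,n-s})\le\cN(K_{a,b},K_{t,n-t})=\ex(n,K_{a,b},K_3)$, which with the lower bound finishes the proof.

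The main obstacle I anticipate is this last cleaning step. The difficulty is real: an atypical vertex may lie in $\Theta(n^{a+b-1})$ copies of $K_{a,b}$, so the error term $o(n^{a+b})$ from the first paragraph is not negligible in absolute terms, and eliminating such vertices genuinely needs both the extremality of $G$ and the color-critical edge of $F$, with the check that adding a missing cross-edge preserves $F$-freeness requiring care while internal edges are still present. An alternative route for this step is a progressive-induction argument: delete a vertex lying in the fewest copies of $K_{a,b}$ and compare $\ex(n,K_{a,b},F)$ with $\ex(n-1,K_{a,b},F)$; here the subtlety is that these quantities agree up to lower-order terms, so one again must exploit the near-complete-bipartite structure to see that the deleted vertex either restores an exact complete bipartite graph or has bounded degree.
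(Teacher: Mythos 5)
Your first paragraph is fine and matches the paper's philosophy (induced copies are exactly the nice copies, non-induced ones contain a triangle, and $\ex(n,K_3,F)=o(n^3)$), but it only yields $\ex(n,K_{a,b},F)=\cN(K_{a,b},K_{t,n-t})+o(n^{a+b})$, which already follows from Theorem~\ref{gepa}. The entire content of the theorem is the exact equality, and there your argument has a genuine gap, which you yourself flag: the ``stability version of Theorem~\ref{brsi}'' is asserted, not proved (the paper only extracts the quantitative \emph{moreover} statement of Theorem~\ref{brsi} from Brown--Sidorenko's proof, not a structural stability theorem), and the cleaning step for atypical vertices is left open. That step is where the argument would actually fail as written: splitting the count into ``at most $\ex_{\re}(n,K_{a,b},K_3)$ induced copies plus $o(n^{a+b})$ non-induced copies'' cannot by itself force the extremal graph to be complete bipartite, since a candidate graph could trade a sub-$o(n^{a+b})$ deficit in induced copies for non-induced ones; and your proposed fix (add a missing cross-edge at an atypical vertex and ``check'' no copy of $F$ arises) is exactly the delicate point, because while internal edges survive --- in particular internal edges between two atypical vertices, to which your $K_{m,m}$-plus-an-edge argument does not apply since those vertices need not have $\Omega(n)$ common cross-neighbours --- adding a cross-edge can create $F$. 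So the proposal is an honest plan with the decisive steps missing.

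The paper closes the gap without any stability analysis, and it is worth seeing how. First, it splits into cases according to the size $t$ of the largest book $B_t$ in $G$. If $t=o(n)$, then \emph{each} triangle lies in only $\alpha=o(n^{a+b-3})$ aligned copies of $K_{a,b}$ (the extra vertices must come from a book on that triangle), so $\cN(K_{a,b},G)\le \cN_{\re}(K_{a,b},K_3,G)+\alpha\,\cN(K_3,G)$; this weighted combination is then maximized over \emph{all} $n$-vertex graphs by a complete multipartite graph via the weighted symmetrization Theorem~\ref{zyk2}, and the \emph{moreover} part of Theorem~\ref{brsi} shows a non-bipartite complete multipartite optimum would lose $\Omega(n^{a+b})$ induced copies while the triangle term contributes only $o(n^{a+b})$, so the optimum is complete bipartite and the bound is exactly $\cN(K_{a,b},K_{t,n-t})$. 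If instead $t=\Theta(n)$, the color-critical hypothesis puts $F$ inside $K_{k,k+2}$ plus an edge, K\H ov\'ari--S\'os--Tur\'an kills all but $o(n^2)$ edges touching or inside the book's pages, and a direct comparison with the graph obtained by joining two complete bipartite graphs along their larger parts shows $G$ is not extremal. If you want to salvage your route, you would have to actually prove the stability statement for induced $K_{a,b}$ and carry out the vertex-by-vertex cleaning (e.g.\ by progressive induction), which is substantially more work than the paper's two-case argument; alternatively, adopt the weighted-counting trick, which is precisely designed to avoid that work.
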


\begin{proof}
 Let $G$ be an $n$-vertex $F$-free graph. Let us denote by $B_t$ the book graph, which consists of $t$ triangles sharing an edge. Let $B_t$ be the largest book in $G$. We separate two cases. 
 
 \textbf{Case 1.} $t=o(n)$.
 
 For any given triangle in $G$, there are at most $\alpha:=3\binom{t-1}{a-1}\binom{n}{b-2}+3\binom{t-1}{b-1}\binom{n}{a-2}+\binom{t-1}{a+b-3}=o(n^{a+b-3})$ copies of $K_{a,b}$ that align with that triangle. Indeed, either there are two vertices of the triangle in a partite set of $K_{a,b}$, and the third is in the other partite set, in which case we need to pick the other vertices of that partite set from a book containing the triangle, or all the three vertices of the triangle is on the same partite set of $K_{a,b}$, and we need to pick the $a+b-3$ other vertices from a book containing that triangle.
 
 Therefore, we have that $\cN(K_{a,b},G)\le\cN_{\re}(K_{a,b},K_3,G)+\alpha\cN(K_{a,b},G)$. By Theorem \ref{zyk2}, we have $\cN_{\re}(K_{a,b},K_3,G)+\alpha\cN(K_{a,b},G)\le\cN_{\re}(K_{a,b},K_3,T)+\alpha\cN(K_3,T)$ for a complete multipartite graph $T$. If $T$ has more than two partite sets, then Theorem \ref{brsi} implies that $\cN_{\re}(K_{a,b},K_3,T)$ is smaller by $\Omega(n^{a+b})$ than $\cN_{\re}(K_{a,b},K_3,T')$ for some complete bipartite graph $T'$, using that $\cN_{\re}(K_{a,b},K_3,G)$ is the number of induced copies of $K_{a,b}$ in $G$. As $\alpha\cN(K_3,T)=o(n^{a+b})$, this shows that $\cN(K_{a,b},G)\le\cN_{\re}(K_{a,b},K_3,G)+\alpha\cN(K_{a,b},G)\le\cN_{\re}(K_{a,b},K_3,T)+\alpha\cN(K_{a,b},T)\le \cN_{\re}(K_{a,b},K_3,T')-\Omega(n^{a+b})+\alpha\cN(K_{a,b},T')<\cN_{\re}(K_{a,b},K_3,T')=\cN(K_{a,b},T')$. If $T$ has at most two partite sets, then $\cN(K_3,T)=0$, thus we have $\cN(K_{a,b},G)\le \cN(K_{a,b},T)$.
 
 \textbf{Case 2.} $t=\Theta(n)$. 
 
 As $F$ is 3-chromatic with a color-critical edge, for some $k$ we have that $F$ is contained in the graph we obtain from $K_{k,k+2}$ by adding an additional edge inside the larger partite set.
  Consider a copy of $B_t$ in $G$, let $x$ and $y$ be the vertices of $B_t$ that are connected to each other vertex of that $B_t$ and let $U$ be the set of the other $t$ vertices in that book. Let $U'$ denote the set of $n-t-2$ vertices of $G$ not in the book. It is easy that there is no copy of $K_{k,k}$ such that one partite set is in $U$ and the other partite set is in $U\cup U'$, as that would form $K_{k,k+2}$ with $x$ and $y$. Then by the K\H ov\'ari-T.S\'os-Tur\'an theorem \cite{kst}, there are $o(n^2)$ edges between $U$ and $U'$ and inside $U$. 
  
  This shows that there are $o(n^{a+b})$ copies of $K_{a,b}$ containing an edge between $U$ and $U'$ or inside $U$. If $n-t=o(n)$, then there are also $o(n^{a+b})$ copies of $K_{a,b}$ not inside $U$, thus we are done.
  
  Assume now that $n-t=\Theta(n)$. The copies of $K_{a,b}$ are either inside $U$ (at most $\ex(t,K_{a,b},F)$ copies) or inside $U'$ ((at most $\ex(n-t-2,K_{a,b},F)$ copies) or contain $x$ or $y$ or an edge between $U$ and $U'$ ($o(n^{a+b})$ copies). Recall that we have $\ex(m,K_{a,b},F)=\cN(K_{a,b},T)+o(m^{a+b})$ for some $m$-vertex complete bipartite graph $T$ using Theorem \ref{gepa} and that $\ex(m,K_{a,b},K_3)=\cN(K_{a,b},T)$ for some complete bipartite graph (a result of Gy\H ori, Pach and Simonovits, mentioned at the introduction).
  Therefore, we have $\cN(K_{a,b},G)=\cN(K_{a,b},T_1)+\cN(K_{a,b},T_2)+o(n^{a+b})$, where $T_1$ is a complete bipartite graph on $t$ vertices and $T_2$ is a complete bipartite graph on $n-t-2$ vertices. On the other hand, let us take a copy of $T_1$ and a copy of $T_2$ and connect the vertices of the larger partite sets of them. The resulting graph is bipartite, thus $F$-free. Clearly we created $\Theta(n^{a+b})$ additional copies of $K_{a,b}$, thus this graph contains more copies of $K_{a,b}$ than $G$.
\end{proof}

Let us continue with the proof of Proposition \ref{korok} that we restate here for convenience.

\begin{proposition*} $\ex_{\re}(n,K_3,C_{2k+1})=O(n^{1+1/k})$.
\end{proposition*}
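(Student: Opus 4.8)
The plan is to follow the strategy of Gy\H ori and Li for $C_{2k+1}$-free graphs \cite{LiGy}, but to use the cycle condition only locally, at the nice triangles. Since $C_{2k+1}$ has $2k+1>3$ vertices, a triangle aligns with a copy of $C_{2k+1}$ exactly when all three of its vertices lie on that copy; thus a triangle $uvw$ is nice if and only if no copy of $C_{2k+1}$ in $G$ passes through all of $u,v,w$. Let $G'$ be the spanning subgraph of $G$ formed by the edges lying in at least one nice triangle. Every nice triangle of $G$ is still a triangle of $G'$ and is still nice in $G'$, since passing to a subgraph only destroys copies of $C_{2k+1}$; hence $\cN_{\re}(K_3,C_{2k+1},G)\le \cN_{\re}(K_3,C_{2k+1},G')$, and we may assume from now on that every edge of $G$ lies in a nice triangle. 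Then I would first bound the number of nice triangles by $|E(G)|$. Fix a vertex $v$ and let $E_v$ be the set of edges $xy$ of $G[N(v)]$ with $vxy$ nice. If a path on $2k$ vertices in $G[N(v)]$ contained an edge of $E_v$, adding $v$ to that path would produce a copy of $C_{2k+1}$ through the three vertices of the corresponding nice triangle, a contradiction; hence $(N(v),E_v)$ contains no path on $2k$ vertices, and the Erd\H os--Gallai theorem on paths gives $|E_v|\le (k-1)\,d(v)$. Summing over $v$, $\cN_{\re}(K_3,C_{2k+1},G)=\tfrac13\sum_v|E_v|\le \tfrac{2(k-1)}{3}|E(G)|$, so it remains to prove that $|E(G)|=O(n^{1+1/k})$.

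To bound $|E(G)|$ I would split the edges into the set $E_s$ of edges lying on no copy of $C_{2k}$ in $G$ and the set $E_u=E(G)\setminus E_s$ of edges lying on some copy of $C_{2k}$. A copy of $C_{2k}$ inside $(V(G),E_s)$ would use only edges of $E_s$, contradicting the definition of $E_s$, so $(V(G),E_s)$ is $C_{2k}$-free and $|E_s|\le \ex(n,C_{2k})=O(n^{1+1/k})$ by the Bondy--Simonovits theorem \cite{bosi}. For $E_u$ the key point is a confinement property: if $C$ is a $2k$-cycle of $G$, $e=xy$ an edge of $C$, and $xyz$ a nice triangle containing $e$, then $z\in V(C)$ — otherwise $z$ together with the path $C-e$, which has exactly $2k$ vertices and endpoints $x,y$, forms a copy of $C_{2k+1}$ through $x,y,z$, contradicting niceness. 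Hence every $2k$-cycle of $G$, together with the nice triangles certifying its edges, is contained in its own $2k$-vertex set; in particular every edge of $E_u$ lies inside one of the bounded ``blobs'' $G[V(C)]$. It then remains to show that these blobs overlap so little — I expect one can show that two of them cannot even share an edge without creating a copy of $C_{2k+1}$ through a nice triangle — that they span only $O_k(n)$ edges in total, giving $|E_u|=O_k(n)$ and hence $|E(G)|=O(n^{1+1/k})$.

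I expect the last step to be the main obstacle. Since a $2k$-cycle of $G$ need not be induced, its vertex set may carry many chords and several auxiliary triangles, and it takes care to use the ``no copy of $C_{2k+1}$ through a nice triangle'' constraint to rule out large overlaps between distinct confined $2k$-cycles and thereby bound the number of blob edges linearly. The other ingredients — the reduction to $G'$, the Erd\H os--Gallai bound, and the $C_{2k}$-free bound on $E_s$ — are routine.
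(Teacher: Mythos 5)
The first half of your argument (reducing to a graph in which every edge lies in a nice triangle, and then bounding the number of nice triangles through a vertex $v$ by $(k-1)d(v)$ via Erd\H os--Gallai, since a $P_{2k}$ in $G[N(v)]$ closes up with $v$ into a $C_{2k+1}$ through the triangle) is correct and is essentially the paper's opening step. The genuine gap is in the second half, and the specific route you propose cannot be repaired: both the hoped-for lemma that two $2k$-cycles cannot share an edge and the target bound $|E_u|=O_k(n)$ are false, already for $k=2$. Take the point--line incidence graph of a projective plane and replace each point $p$ by an adjacent pair $p_1p_2$, both joined to all lines through $p$. A short parity/uniqueness argument (a $C_5$ must use exactly one pair-edge $q_1q_2$, and would then force two distinct points to lie on two distinct common lines) shows this graph is $C_5$-free, so \emph{every} triangle $p_1p_2L$ is nice and every edge lies in a nice triangle; yet every incidence edge $p_iL$ lies on the $4$-cycle $p_1Lp_2L'$ for each further line $L'$ through $p$, so $E_u$ has $\Theta(n^{3/2})$ edges, and distinct $4$-cycles such as $p_1Lp_2L'$ and $p_1Lp_2L''$ do share edges. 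Note that your confinement observation is satisfied here (the apex of the certifying nice triangle lies \emph{on} the $4$-cycle), which is exactly the configuration your plan has no way to exclude; and even if blobs were pairwise edge-disjoint, $2k$-sets meeting pairwise in at most one vertex can number $\Theta(n^2)$, so edge-disjointness alone would not yield a linear (nor even an $O(n^{1+1/k})$) bound.

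The missing idea, which is how the paper proceeds, is to force the apex of the nice triangle \emph{off} the long cycle. The paper $3$-colours the vertices at random, deletes monochromatic edges and every edge not lying in a rainbow nice triangle, and considers the red--blue subgraph $G_1$ of the retained graph $G^*$: if $G_1$ contained a $C_{2k}$, then any edge $uv$ of it lies in a rainbow nice triangle $uvw$ with $w$ green, hence $w$ is not on the cycle, and replacing $uv$ by $uw,wv$ gives a $C_{2k+1}$ aligning with $uvw$, a contradiction. So each colour-pair subgraph is $C_{2k}$-free, Bondy--Simonovits gives $|E(G^*)|=O(n^{1+1/k})$, and since a constant fraction ($\ge 2/9$) of the nice triangles of $G$ survives in $G^*$ and stays nice, your first-half counting applied to $G^*$ finishes the proof. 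Without such a device (or another way to guarantee the apex avoids the cycle), your decomposition into $E_s$ and $E_u$ cannot close, as the example above shows.
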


\begin{proof} For the upper bound, we follow a proof F\"uredi and \"Ozkahya \cite{furozk}, who gave an upper bound on $\ex(n,K_3,C_{2k+1})$. 
First we claim that the number of triangles not aligning with any $C_\ell$ in an arbitrary graph $G$  is at most $(\ell-3)|E(G)|/3$. Indeed, consider a vertex $v$. The number of triangles containing $v$ is equal to the number of edges in its neighborhood. If there is a $P_{\ell-1}$ in the neighborhood of $v$, then the triangles formed by the edges of this path with $v$ are not nice. Therefore, the number of nice triangles containing $v$ is at most the number of edges in the graph $G'$, which we obtain by restricting $G$ to the neighborhood of $v$ and deleting all the edges of paths $P_{\ell-1}$. Then $G'$ is a $P_{\ell-1}$-free graph on $d(v)$ vertices, thus has at most $(\ell-3)d(v)/2$ edges by a well-known theorem of Erd\H os and Gallai \cite{Er-Ga}. Adding up for each vertex, we obtain an upper bound $(\ell-3)|E(G)|$, where each triangle is counted three times.

Let us consider now an arbitrary graph $G$, and take a random 3-coloring of its vertices, where every vertex gets color red, blue or green with probability 1/3. Let $G^*$ be obtained from $G$ by deleting the monochromatic edges, and afterwards deleting each edge that is not contained in any 3-chromatic nice triangle. Clearly, every triangle of $G$ is in $G'$ with probability at least $2/9$. Therefore, there is a 3-coloring such that at least 2/9 of the nice triangles of $G$ is in $G^*$. Obviously they are also nice triangles in $G^*$, thus it is enough to show that there are $O(n^{1+1/k})$ nice triangles in $G^*$. 

Consider first the subgraph $G_1$ of $G^*$ having the edges with blue and red end-vertices. We claim that $G_1$ does not contain $C_{2k}$. Indeed, let $uv$ be an edge of such a $C_{2k}$. Then $uv$ is in a 3-chromatic nice triangle $uvw$, thus $w$ is green. This means $w$ is not in the $C_{2k}$, thus replacing $uv$ with $uw$ and $wv$ we obtain a $C_{2k+1}$ in $G$ that aligns with the triangle $uvw$, a contradiction. Using the Bondy-Simonovits theorem \cite{bosi}, this implies that $G_1$ has $O(n^{1+1/k})$ edges. By the same argument for the colors blue and green and the colors green and red we obtain that $G^*$ has $O(n^{1+1/k})$ edges. By the first paragraph of the proof, we also have that $G^*$ contains $O(n^{1+1/k})$ nice triangles, completing the proof.
\end{proof}

Let us continue with the proof of Theorem \ref{parkor}, that we restate here for convenience.

\begin{theorem*}
 \textbf{(i)} $n^{2-o(1)}=\ex_{\re}(n,K_3,C_4)=o(n^2)$.

\textbf{(ii)} For any $k\ge 3$ we have that
$\ex_{\re}(n,K_3,C_{2k})=O(n^{1+\frac{1}{k-1}})$ and $\ex_{\re}(n,K_3,C_{2k})=\Omega(\ex(n,\cC_{2k-2}))$.
\end{theorem*}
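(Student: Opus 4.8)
The four bounds are proved separately; for the two upper bounds I will recycle the template of the proof of Proposition~\ref{korok}, while the lower bounds come from Ruzsa--Szemer\'edi-- and Gy\H ori--Li--type constructions. For part (i) the first step is to record what ``nice with respect to $C_4$'' means: since $|V(C_4)|=4>3$, a triangle $xyz$ aligns with a $C_4$ exactly when some fourth vertex $w$ is joined to at least two of $x,y,z$ (two edges of the triangle together with two edges from $w$ close a $4$-cycle). Hence a triangle is nice iff each of its three edges lies in a unique triangle of $G$, namely itself; in particular a nice triangle shares no edge with any other triangle of $G$. Let $G^{\#}$ be the subgraph of $G$ formed by the edges lying in some nice triangle. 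Then every triangle of $G^{\#}$ is nice (a triangle using edges of three different nice triangles would share an edge with one of them), so every edge of $G^{\#}$ lies in exactly one triangle of $G^{\#}$, and the Ruzsa--Szemer\'edi $(6,3)$-theorem gives $|E(G^{\#})|=o(n^{2})$, hence $o(n^{2})$ nice triangles. For the matching lower bound I would take the Ruzsa--Szemer\'edi graph built from a Behrend set: a tripartite $n$-vertex graph with $n^{2-o(1)}$ triangles (indexed by pairs $(i,a)$, $a$ in a large set with no $3$-term arithmetic progression) in which every edge lies in exactly one triangle; by the characterisation above all of these triangles are nice with respect to $C_4$.

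For the upper bound of part (ii) I would follow the proof of Proposition~\ref{korok} verbatim up to the point where, after an optimal random $3$-colouring, one is left with a graph $G^{*}$ in which every edge lies in a $3$-chromatic nice triangle, so that (by the Erd\H os--Gallai counting in the first paragraph of that proof) it suffices to show $|E(G^{*})|=O(n^{1+1/(k-1)})$. Decompose $G^{*}=G_{RB}\cup G_{RG}\cup G_{BG}$ by the colours of the endpoints; each piece is bipartite. The claim is that each piece, say $G_{RB}$, is $C_{2k-2}$-free; then Bondy--Simonovits \cite{bosi} yields the desired edge bound. To prove the claim, suppose $C=z_{0}z_{1}\cdots z_{2k-3}z_{0}$ is a $(2k-2)$-cycle in $G_{RB}$; each edge $z_{i}z_{i+1}$ lies in a nice $3$-chromatic triangle $z_{i}z_{i+1}w_{i}$ with $w_{i}$ green, so $w_{i}\notin C$. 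If $w_{0}\neq w_{j}$ for some $j$, then replacing the edges $z_{0}z_{1}$ and $z_{j}z_{j+1}$ of $C$ by the two-edge paths $z_{0}w_{0}z_{1}$ and $z_{j}w_{j}z_{j+1}$ produces a $C_{2k}$ whose vertex set contains $\{z_{0},z_{1},w_{0}\}$, contradicting the niceness of $z_{0}z_{1}w_{0}$. In the remaining case all $w_{i}$ coincide with a single green vertex $w$ adjacent to every vertex of $C$; here I would splice in instead an apex $s$ of the edge $z_{1}w$ (which, like all edges of $G^{*}$, lies in a nice $3$-chromatic triangle) to build a $C_{2k}$ through $\{z_{0},z_{1},w\}$, handling the degenerate possibilities (the new apex lying on $C$, which forces a chord and lets one recurse on a shorter cycle) by a short case analysis.

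For the lower bound of part (ii) I would mimic the Gy\H ori--Li construction \cite{LiGy} for $\ex(n,K_{3},C_{2k+1})=\Omega(\ex(n,\cC_{2k}))$. Start from a bipartite graph $B$ with parts $X,Y$ on about $n/2$ vertices that has no cycle of length at most $2k-2$ and $\Omega(\ex(n,\cC_{2k-2}))$ edges (a bipartite subgraph of an extremal $\cC_{2k-2}$-free graph keeping at least half the edges works). Form $G$ on $X\cup Y\cup Y'$, with $Y'$ a disjoint copy of $Y$, by keeping $B$ and adding, for each $y\in Y$, the edge $yy'$ together with all edges $y'x$ for $x\in N_{B}(y)$. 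Then $G$ has $|E(B)|=\Omega(\ex(n,\cC_{2k-2}))$ triangles $xyy'$. It remains to verify that each such triangle is nice with respect to $C_{2k}$: any $C_{2k}$ through it uses the vertex $y'$, and identifying every primed vertex with its unprimed copy turns that $C_{2k}$ into a closed walk of length at most $2k$ in $B$; after deleting the loops created by any edges $zz'$ that were used, this walk is either of odd length (impossible in the bipartite graph $B$) or contains a cycle of length at most $2k-2$ (impossible since $B$ has girth larger than $2k-2$).

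The delicate points are both in part (ii): making the ``common apex'' subcase of the upper bound fully rigorous (ruling out that the local surgery only ever yields a $C_{2k-1}$, or a cycle revisiting a vertex), and pinning down the loop/parity bookkeeping in the lower-bound verification. In both cases the difficulty is to exclude short or self-intersecting cycles produced by a local modification, and it may be cleanest either to impose a little extra structure on $B$ (e.g.\ girth exactly $2k$) or to phrase the niceness conditions of parts (i) and (ii) uniformly in terms of linear triple systems.
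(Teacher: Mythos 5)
Your part (i) and your lower bound in part (ii) are essentially the paper's arguments: for (i), the observation that a triangle is nice with respect to $C_4$ exactly when each of its edges lies in no other triangle reduces the problem to the Ruzsa--Szemer\'edi/Behrend bounds (the paper phrases this as $\ex_{\re}(n,K_3,C_4)=\ex(n,K_3,B_2)$ and quotes Alon--Shikhelman, which is the same statement), and your graph obtained by doubling one side of a bipartite $\cC_{2k-2}$-free extremal graph is exactly the paper's construction, verified by the same projection-to-a-closed-walk argument (the paper's verification is just as terse about the degenerate walks as yours, so no complaint there).

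The genuine gap is in the upper bound of part (ii). Your key claim, that each two-coloured piece, say $G_{RB}$, of $G^*$ is $C_{2k-2}$-free, is false, and the ``common apex'' subcase you flag cannot be repaired by the local surgery you sketch. Take $G$ to be the wheel consisting of a $(2k-2)$-cycle $z_0z_1\cdots z_{2k-3}$ together with a hub $w$ joined to every $z_i$: this graph has only $2k-1$ vertices, hence contains no $C_{2k}$ at all, so every triangle $z_iz_{i+1}w$ is nice; colouring $w$ green and the rim alternately red and blue gives $G^*=G$ and a full $(2k-2)$-cycle inside $G_{RB}$. In this configuration the only nice triangles through the spoke $z_1w$ have apex $z_0$ or $z_2$, both on the cycle, and no splicing can produce a $C_{2k}$ because none exists; so Bondy--Simonovits cannot be applied to $G_{RB}$, and no case analysis will restore your claim, since the conclusion itself fails. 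The paper's proof accepts that such wheels occur and handles them globally: it shows that two copies of $C_{2k-2}$ in $G_1$ with different vertex sets share at most one vertex (using the forced common hub to route a $C_{2k}$ through a shared pair of vertices, contradicting niceness), so there are only $O(n^2)$ copies of $C_{2k-2}$, and then a supersaturation theorem for even cycles (Erd\H os--Simonovits, proved by Jiang--Yepremyan) converts this scarcity into the edge bound $|E(G_1)|=O(n^{1+\frac{1}{k-1}})$, after which the Erd\H os--Gallai counting finishes as in Proposition \ref{korok}. Without this near-disjointness plus supersaturation step, or some substitute for it, your argument does not yield the claimed bound $O(n^{1+\frac{1}{k-1}})$.
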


\begin{proof}
 Let us start with the proof of \textbf{(i)} and let $B_2$ denote the book with 2 pages, i.e. two triangles sharing an edge (in other words, the 4-vertex graph with 5 edges). Alon and Shikhelman \cite{as} showed that $n^{2-o(1)}=\ex(n,K_3,B_2)=o(n^2)$. Observe that if an edge $uv$ of a graph is contained in at least two triangles, then each triangle containing $uv$ aligns with a $B_2$. Thus we can delete every such edge without decreasing the number of triangles that do not align with any copy of $B_2$. This way we obtain a graph where every edge is contained in at most one triangle, i.e. a $B_2$-free graph. This shows that $\ex_{\re}(n,K_3,B_2)=\ex(n,K_3,B_2)$. A triangle aligns with a $C_4$ if and only if it aligns with a $B_2$, thus we have $\ex_{\re}(n,K_3,C_4)=\ex_{\re}(n,K_3,B_2)=\ex(n,K_3,B_2)$. 
 
 Let us continue with the proof of \textbf{(ii)}. For the upper bound, we proceed similarly to the proof of Proposition \ref{korok}, thus from now on we assume familiarity with that proof and avoid repeating some details. We take an arbitrary graph $G$ and a random 3-coloring of its vertices, where every vertex gets color red, blue or green with probability 1/3. Let $G^*$ be obtained from $G$ by deleting the monochromatic edges, and afterwards deleting each edge that is not contained in any 3-chromatic nice triangle. Then, as in Proposition \ref{korok}, it is enough to show that there are $O(n^{1+\frac{1}{k-1}})$ nice triangles in $G_1$, which consists of the edges with blue and red end-vertices. 
 
 Consider the copies of $C_{2k-2}$ in $G_1$. For any edge $e=uv$ of it, there is a green vertex $w(e)$ connected to both $u$ and $v$ such that $uvw(e)$ is a nice triangle in $G$. If there are two edges $e$ and $e'$ in a copy of $C_{2k-2}$ such that $w(e)\neq w(e')$, then we can use both $w(e)$ and $w'(e)$ to extend the cycle and find a $C_{2k}$ containing $u,v,w(e)$, thus aligning with a triangle, a contradiction. Thus we have that each vertex of that $C_{2k-2}$ is connected to the same $w$, i.e. we found a wheel in $G$ 
 with center $w$, such that each triangle in the wheel is nice in $G$.
 
\begin{clm}
  Any two copies of $C_{2k-2}$ in $G_1$ with different vertex sets share at most one vertex.
\end{clm} 
\begin{proof}[Proof of Claim]
Assume indirectly that copies $C$ and $C'$ of $C_{2k-2}$ both contain $u$ and $v$. Let $w$ (resp. $w'$) be the centers of the wheels corresponding to $C$ and $C'$. Assume first that $w\neq w'$. Observe that we can go from $u$ to $v$ on a path using all the vertices of $C$ and $w$. With $w'$, this extends to a $C_{2k}$ in $G$ that aligns with the triangles in $C$, a contradiction. Assume now that $w=w'$. Then there is a vertex contained by both $C$ and $C'$, say $u$ that is connected to a vertex $u'$ of $C'$ that is not in $C$. Then we can go through the vertices of $C$ ending with $u$, then to $u'$ and then to $w$ to obtain a $C_{2k}$ that aligns with the triangles of $C$, a contradiction. 
\end{proof}
 
The above claim implies that there are $O(n^2)$ copies of $C_{2k-2}$ in $G''$. Indeed, any pair of vertices can belong to the vertex set of only one such copy, thus there are at most $\binom{n}{2}$ such vertex sets, and there are at most $(2k-3)!/2$ copies of $C_{2k-2}$ on a set of $2k-2$ vertices. We will use a supersaturation result of Simonovits, that appeared in \cite{ersi} without proof (see \cite{jiye} for a proof). It claims that there exist constants $c$ and $c'$ such that if we have $|E(H)|\ge c n^{1+1/k}$ for a graph $H$, then $H$ contains at least $c'\left(\frac{|E(H)|}{|V(H)|}\right)^{2k}$ copies of $C_{2k}$. 
Applying it with $G_1$ and $C_{2k-2}$, we obtain that $|E(G_1)|\le cn^{1+\frac{1}{k-1}}$ for a large enough $c$.

From this point we can again follow the proof of Proposition \ref{korok}: $|E(G^*)|\le 3cn^{1+\frac{1}{k-1}}$, thus the number of nice triangles in $G^*$ is $O(n^{1+\frac{1}{k-1}})$, which implies that the number of nice triangles in $G$ is $O(n^{1+\frac{1}{k-1}})$, completing the proof of the upper bound.

For the lower bound, consider an $n$-vertex bipartite $\cC_{2k-2}$-free graph $G$ with $\Omega(\ex(n,\cC_{2k-2}))$ edges, and let us double the vertices on one side. It means that we replace each vertex $v$ with two vertices $v_1,v_2$, and for each edge $uv$, we add the edges $uv_1$ and $uv_2$. Let $G'$ be the graph obtained this way, then clearly $G'$ has $|E(G')|$ triangles. We claim that each of those is nice. Observe that a cycle in $G'$ corresponds to a walk in $G$ if we consider the visits to $v_1$ and $v_2$ as visiting $v$ twice. A copy $C$ of $C_{2k}$ corresponds to a walk that has $2k$ vertices with repetitions. As there are no cycles of length less than $2k$ in $G$, there are only two possibilities. One is that the walk is on a single edge in $G$, but then the corresponding cycle has at most 4 vertices in $G'$, a contradiction. The other possibility is that the walk in $G$ is a $C_{2k}$, i.e. $C$ does not visit both vertices corresponding to a vertex of $G$. But each triangle $T$ contains two vertices corresponding to a vertex of $G$, thus $T$ does not align with $C$, completing the proof. 
\end{proof}

\section{Concluding remarks}\label{concl}


$\bullet$ Let us return to $\ex_{\re}(n,P_k,P_\ell)$. If $0<\ell-k$ is even, then the upper bound of Proposition \ref{utak} still holds, but we can improve it. We briefly give a sketch of the arguments. Let $H$ be the graph consisting of $k$ internally vertex disjoint paths of length $\ell-k$ connecting two vertices $u$ and $v$. This graph is called a theta graph and it is well-known \cite{fasi} that $\ex(n,H)=O(n^{1+1/(\ell-k)})$. We pick copies $v_1v_2\dots v_k$ of $P_k$ not aligning with any $P_\ell$ by picking first the edge $v_1v_2$, then $v_3v_4$, and so on. One can show that each time, the possible edges to pick do not contain $H$, giving the upper bound $O(n^{k/2+k/(2(\ell-k)})$ if $k$ is even, and $O(n^{(k+1)/2+(k-1)/(2(\ell-k)})$ if $k$ is odd.

We also have the lower bound $\ex_{\re}(n,P_k,P_\ell)\ge \ex(n,P_k,P_\ell)=\Theta(n^{\lceil k/2\rceil})$. We can improve it in the case $k>4$ is even. Let us again give a sketch. One of the constructions for $\ex(n,P_k,P_\ell)$ goes by blowing up every second vertex of $k$-vertex path $v_1v_2\dots v_k$ to linear size. In the case $k$ is even, we have some freedom here, as we need to only avoid blowing up two adjacent vertices, thus we can blow up $v_1,v_4,v_6,\dots,v_k$. For our case, we modify it slightly: we blow up $v_3$ as well, but instead of placing a complete bipartite graph between the vertices replacing $v_3$ and $v_4$, we place a bipartite graph $G'$ with girth at least $\ell-k$. We consider the copies of $P_k$ where the $i$th vertex is $v_i$ or a replacement of $v_i$. Then a copy of a longer path containing every vertex of such a path has a circle inside $G'$, but this means more than $\ell-k$ additional vertices. It is easy to see that this way we get $\Omega(|E(G')|n^{k/2-1})$ copies of $P_k$ not aligning with any $P_\ell$. This gives a lower bound increased by a factor of roughly $n^{2/3(\ell-k)}$, using a theorem of Lazebnik, Ustimenko and Woldar \cite{luw}.


\smallskip

$\bullet$ One could examine the following question.
If we are given an integer $n$ and graphs $F$, $F'$ and $H$, at most how many copies of $H$ can align with some $F$ in an $n$-vertex $F'$-free graph? Clearly, this number plus $\ex_{\re}(n,H,F)$ is an upper bound on $\ex(n,H,F')$. We have examined some problems where we combined a trivial upper bound on this number and a non-trivial bound on $\ex_{\re}(n,H,F)$.

Let us show an example where the opposite happens. More precisely, we have one more twist: we consider copies of $H$ aligning with any copy of any graph from a family of graphs. Gerbner, Gy\H ori, Methuku and Vizer \cite{GGMV2020} showed that $\ex(n,C_4,C_{2k})=(1+o(1))\binom{k-1}{2}\binom{n}{2}$ using the following idea. We say that a pair of vertices is \emph{fat} if they have at least $k$ common neighbors. We count separately the 4-cycles having two fat pairs of opposite vertices and the 4-cycles having at most one fat pair of opposite vertices. They showed that there are $O(n^{1+1/k})$ 4-cycles of the first type and at most $(1+o(1))\binom{k-1}{2}\binom{n}{2}$ of the second type. This second statement is a simple observation: we pick two vertices $\binom{n}{2}$ ways to be a non-fat pair of opposite vertices and then we have to pick the other two vertices out of their at most $k-1$ common neighbors. Observe that here we count the copies of $C_4$ that do not align with any graph obtained from a $C_4$ by adding $k-2$ new common neighbors to both opposite pairs (where the common neighbors of the two pairs may coincide).

$\bullet$ Let us mention an example where handling together the copies of $H$ that align with $F$ and those that do not align with $F$ gives more than our approach. 
It is a simple proof of Qian, Xie and Ge \cite{qxg}. Let $H$ be a graph obtained by adding a vertex to $K_\ell$ and connecting it to an arbitrary number of vertices. They showed that $\ex(n,H,K_k)=\cN(H,T_{k-1}(n))$ for any $k>\ell$. The number of copies of $H$ that align with $K_{\ell+1}$ is maximized by the Tur\'an graph using Zykov's theorem. However, it is not clear whether the number of copies of $H$ that do not align with $K_{\ell+1}$ is maximized by the Tur\'an graph. What helps is that the surplus in the number of copies of $H$ that align with $H$ can also be used.


$\bullet$
Finally, let us discuss a variant. In the special case where $F$ contains $H$, we say that a copy $H^*$ of $H$ \emph{strongly aligns} with a copy $F^*$ of $F$ in $G$ if $F^*$ contains $H^*$ as a subgraph. It means that not only the vertices, but also the edges of $H^*$ are in $F^*$. For example, the analogue of Proposition \ref{utak} also holds in this setting, and immediately extends to the case $\ell-k$ is even. Indeed, in the proof of Proposition \ref{utak} we showed $\Omega(n^{k-2})$ copies of $P_k$ that cannot be extended to longer paths.

\end{document}